\title[On entangled commutators]{On entangled and multi-parameter commutators}
\author{Kangwei Li}
\author{Henri Martikainen}
\address[K.L.]{Center for Applied Mathematics, Tianjin University, Weijin Road 92, 300072 Tianjin, China}
\email{kli@tju.edu.cn}
\address[H.M.]{Department of Mathematics and Statistics, Washington University
in St. Louis, 1 Brookings Drive, St. Louis, MO 63130, USA}
\email{henri@wustl.edu}
\subjclass[2010]{42B20}
\keywords{Calder\'on--Zygmund operators, singular integrals, multi-parameter
    analysis, commutators, compactness, Zygmund dilations}
\begin{document}

\allowdisplaybreaks

\begin{abstract}
	We complement the recent theory of general singular integrals $T$ invariant under
	the Zygmund dilations $(x_1, x_2, x_3) \mapsto (s x_1, tx_2, st x_3)$
	by proving necessary and sufficient conditions for the boundedness and compactness
	of commutators $[b,T]$ from $L^p \to L^q$. Previously, only the $p=q$ upper bound in terms
	of a Zygmund type little $\BMO$ space was known for general operators,
	and it appears that there has been some confusion about the corresponding lower bound in recent literature.
	We give complete characterizations whenever $p \le q$
	for a general class of non-degenerate Zygmund type singular integrals. Some
	of the results are somewhat surprising in view of existing papers -- for instance,
	compactness always forces $b$ to be constant. Even in the simpler situation
	of bi-parameter singular integrals it appears that this has not been observed previously.
\end{abstract}

\maketitle

\section{Introduction}
This paper is continuation for the recent papers Hyt\"onen--Li--Martikainen--Vuorinen \cite{HLMV}
and Airta--Li--Martikainen \cite{ALM24} concerning singular integral operators (SIOs)
invariant under the simplest known entangled dilations, the Zygmund dilations
$(x_1, x_2, x_3) \mapsto (s x_1, t x_2, st x_3)$.
Here invariance under dilations means that the kernel $K$
of the underlying SIO $Tf(x) = \int K(x,y)f(y)\ud y$ satisfies suitable invariant estimates --
for instance, for the standard size estimate $|K(x,y)| \lesssim \frac{1}{|x-y|^d}$
in $\R^d$ the natural invariance is with respect to the
one-parameter dilations $x \mapsto tx$.
On the other hand,
in the classical multi-parameter setup $\R^d = \R^{d_1} \times \cdots \times \R^{d_m}$, $d_1 + \cdots + d_m = d$,
the dilations are independent (non-entangled) in each parameter:
$x = (x_1, \ldots, x_m) \mapsto (t_1x_1, \ldots, t_m x_m)$.
Modern developments from the past decade place such product space theory
roughly on the same level as the one-parameter one. However, there is still a lot that
we do not understand about entangled dilations. In this paper we answer some of the most natural
questions about commutators $[b,T]$ in the Zygmund dilation setting in $\R^3$,
simultaneously shedding new light to some particular pure multi-parameter questions.

SIOs with entangled dilations, on a general level, have their origin in the work of Nagel-Wainger \cite{NW}.
They proved the $L^2$ boundedness of a class of operators with kernels invariant under both translations and dilations
of general entangled type. In particular,
the three dimensional convolution kernel
$$
	K(x):=\frac{\sign(x_1x_2)}{(x_1x_2)^2+x_3^2}=(st)^2K(\rho_{s,t}(x)),
$$
where $\rho_{s,t}(x) = (sx_1, tx_2, stx_3)$, is the so-called Nagel--Wainger kernel --
the most basic example of an SIO kernel that is invariant under Zygmund dilations
$\rho_{s,t}$ in the above strong sense. Zygmund dilations
also appear naturally as they are compatible with the group law of the Heisenberg
group, see M\"uller--Ricci--Stein \cite{MRS}. After Nagel--Wainger, Ricci--Stein \cite{RS}
dealt with the $L^p$ boundedness relaxing also some of the invariance assumptions -- however,
the kernels there are somewhat indirectly described using a suitable series representation.
This has then been further generalized by Fefferman--Pipher \cite{FEPI} to cover some optimal
weighted estimates for Zygmund type multipliers, which are smooth enough.
Finally, Han--Li--Lin--Tan \cite{HLLT} considered general convolution kernels, but only in the unweighted case.
This was, nevertheless, an important step in recognizing how the abstract theory of Zygmund SIOs should look like
structurally (how kernel assumptions should be formulated).

The general theme of research for us, and others, in this setting has been to
obtain various optimal $L^p$ estimates allowed by the Zygmund invariance --
in particular, one wants to beat the standard tri-parameter estimates
by exploiting carefully the additional Zygmund invariance. Thus, just plain $L^p$
estimates for $T$ itself are not the most interesting ones as they do not allow us to improve
over the tri-parameter estimates, and allow the analysis to venture outside of the Zygmund context,
for instance, by using the tri-parameter maximal function instead of the more refined Zygmund
maximal function.
However, weighted estimates $L^p(w) \to L^p(w)$ are very interesting -- the general hope is that
the invariances should allow such estimates for a larger class of weights
than the tri-parameter $A_p$ weights. Proving them should also require
the careful exploitation of the Zygmund invariance. Fefferman--Pipher \cite{FEPI}
introduced the Zygmund weights
\begin{equation*}
	[w]_{A_{p,Z}} :=\sup_{R\in\mathcal{R}_Z}
	\Big(\frac{1}{|R|}\int_R w(x)\ud x\Big)\Big(\frac{1}{|R|} \int_R w^{-1/(p-1)}(x) \ud x\Big)^{p-1}
	<\infty, \qquad 1 < p < \infty,
\end{equation*}
where the supremum is over the {\em Zygmund rectangles} $R = I_1\times I_2\times I_3$, $\ell(I_3) = \ell(I_1)\ell(I_2)$,
for this very purpose. For the more restricted tri-parameter
weights the supremum would run over all rectangles -- for examples
of the Zygmund weights see \cite{HLMV}.
Such optimal weighted estimates are already non-trivial for maximal functions and, in fact,
particularly deep if one would try to go much beyond the Zygmund dilations. Nevertheless, Fefferman--Pipher
proved $L^p(w) \to L^p(w)$ estimates, whenever $w \in A_{p, Z}$, for the natural Zygmund
maximal function and also for the so-called Fefferman--Pipher multipliers. Every Ricci--Stein operator \cite{RS}
is a sum of two Fefferman--Pipher multipliers.

However, the story turned out to be much more complicated, but to see this we must consider
general SIOs. Indeed, there is a precise threshold for the kernel estimates:
if the kernel decay in terms of the deviation of $z\in\R^3$ from the ``Zygmund manifold'' $\abs{z_1z_2}=\abs{z_3}$
is not fast enough, general SIOs invariant under Zygmund dilations can fail to be bounded with Zygmund weights.
It turns out that the Fefferman--Pipher multipliers just barely have enough decay for this.
We constructed counterexamples and showed the delicate positive result in the optimal range in \cite{HLMV},
and also pushed beyond the convolution setting of \cite{HLLT}.
The decay from the Zygmund manifold is measured using the quantity
$$
	D_{\theta}(t_1, t_2, t_3) := \Big(\frac{t_3}{t_1t_2} + \frac{t_1t_2}{t_3} \Big)^{-\theta}, \, 0 < \theta \le 1,
$$
and it turned out that $\theta < 1$ is insufficient for weighted bounds.

Now, what does this have to do with commutators $[b,T]f := bTf - T(bf)$? Quite a lot actually. First,
the commutator question is philosophically similar to the weighted one: can we beat the tri-parameter
results and allow more general symbols $b$ if $T$ is Zygmund and not just tri-parameter?
Second, at least when it comes to sufficient conditions for the boundedness $[b,T] \colon L^p \to L^p$,
one can use Zygmund weighted estimates of $T$ in connection with the classical
Cauchy integral trick. This was actually done in \cite{DLOPW-ZYGMUND} -- however, this limits
the amount of SIOs the commutator estimates are valid for, since, as discussed, it was
realized in \cite{HLMV} that such weighted estimates often fail. However, we managed
to prove in \cite{ALM24} that
$$
	\|b\|_{\bmo_Z} := \sup_{R \in \mathcal{R}_Z} \frac{1}{|R|}\int_{R}|b(x) -\ave{b}_R| \ud x < \infty,
$$
is sufficient without any such kernel decay restrictions that apply to the weighted case ($\theta < 1$ is fine for this).
In the current paper we prove the necessity of this condition, with general $\theta$, for a natural class of non-degenerate Zygmund SIOs,
and consider similar questions in the off-diagonal $L^p \to L^q$, as well as analogous compactness questions.
The weighted theory was surprising in \cite{HLMV} -- there appears to be some surprises here as well.

First, the necessity of $b \in \bmo_Z$ was also discussed in \cite{DLOPW-ZYGMUND}.
In \cite{DLOPW-ZYGMUND} it is first showed that $b \in \bmo_Z$ is necessary for the
Nagel--Wainger kernel -- our result recovers this by showing that $b \in \bmo_Z$
is actually necessary for a wide class of non-degenerate Zygmund SIOs (see Theorem \ref{thm:ppbddnec}),
and the Nagel--Wainger kernel is a particular instance of such kernels.
But a contrasting result is stated as Theorem 1.5 in \cite{DLOPW-ZYGMUND} saying
that $b \in \bmo_Z$ is \textbf{not} a necessary condition for \textbf{any}
Fefferman--Pipher multiplier or Ricci--Stein \cite{RS}
operator.
However, our result seems to point to the opposite being true.
Indeed, we know from \cite{HLMV} that all of these operator classes fall into
the general Zygmund SIO framework of \cite{HLMV}, and we now prove the necessity
of $b \in \bmo_Z$ for all such general SIOs that are, in addition, suitably non-degenerate.
We discuss this situation further in the Appendix \ref{app:FP}.

Our next result concerns the sufficient and necessary conditions for boundedness in the off-diagonal
$L^p \to L^q$, $p < q$. While the diagonal question is the most well-known in the classical situation,
the off-diagonal questions also have a long history and interesting applications -- we refer, for instance,
to the introduction of \cite{Hyt-Com} for a good account of the boundedness theory and their applications.
When $T$ is a multi-parameter SIO, Oikari showed in \cite{Oik2022bipar}
that $b$ must be constant in this case. The result is more interesting in the Zygmund situation --
$b$ does have to be constant in $x_1, x_2$ but is allowed to be H\"older continuous
with exponent $2\alpha$, $\alpha := 1/p-1/q$, and thus non-constant if $\alpha \le 1/2$
(see Theorem \ref{thm:equidefoff} and Theorem \ref{thm:suffbddoff} for the precise statements).
We also comment that we prove our upper bounds (for $p < q$ and $p=q$) so that they only involve
the so-called off-diagonal constants -- a weaker notion of $\|[b,T]\|_{L^p \to L^q}$.
Such a style is familiar from \cite{Hyt-Com} and \cite{AHLMO2021}. We do not attempt to tackle
$p > q$ as the necessity result is only known in one-parameter \cite{Hyt-Com} but not
in bi-parameter \cites{AHLMO2021, Oik2022bipar} situations, and there currently are well-known
sparse related obstructions to the multi-parameter proofs (see \cite{AHLMO2021}).

The compactness theory of commutators also has a long history -- already Uchiyama \cite{Uch1978}
characterized the compactness $[b, T] \colon L^p \to L^p$ in terms of a vanishing mean oscillation
space $\VMO \subset \BMO$ -- this was, of course, in the classical one-parameter situation.
Multiple extensions have since been done, see, for instance, the introduction of \cite{HOS2023} for a good account.
In this paper we prove that compactness is non-interesting for $[b,T]$ in the
Zygmund situation -- however, the proof of this is interesting.
That is, we prove that $b$ must be a constant for $[b,T]$ to be compact $L^p \to L^q$ whenever $p \le q$
(see Theorem \ref{thm:comdiag} and Theorem \ref{thm:compactnessoff}).
Interestingly, an easier version of our argument also gives that $b$ must be constant in the
bi-parameter case -- that is, when $T$ is a bi-parameter SIO.
Already the bi-parameter version can be seen as somewhat surprising -- compare to
Lacey--Li \cite{Lacey-Li-JMAA}, where a $\operatorname{vmo}$ type characterization is
stated in the bi-parameter Riesz transform case. But their
vanishing mean oscillation type space requires only that oscillations vanish when $\diam(R) \to 0$ --
we obtain that oscillations needs to actually vanish even when only one side length goes to $0$, and this
forces the symbol to be constant. We also note that
our result seems to be consistent with some results concerning big Hankel operators
on the polydisk \cites{AYZ, CotSad}.

The above also means that it appears to us that for at least these multi-parameter commutators
$[b,T]$ it is not meaningful
to study finer properties of compactness, such as, when $[b,T]$
is in the Schatten class (results in this direction have appeared in the first version of the preprint \cite{Lacey-Li-Wick-2023} for the bi-parameter
Hilbert transform, but the authors have informed us that they have now removed them).
In contrast, compactness yields non-trivial $\VMO$ type spaces for the harder bi-commutator $[T_1, [b, T_2]]$ -- see
Martikainen--Oikari \cite{Martikainen-Oikari-2024} for recent work in this direction. That is, there are
multi-parameter commutator settings
where compactness is meaningful -- see also Lacey--Terwilleger--Wick \cite{LaTeWi06}
for earlier work related to the compactness of these bi-commutators. So in these situations two one-parameter SIOs are commuted
to yield bi-parameter setups,
but there is no obvious analogue of this in entangled situations.

We discuss the proofs. The necessity results rely on the so-called approximate weak factorization (AWF)
method introduced in the one-parameter setting in \cite{Hyt-Com} and then later used in pure multi-parameter situations e.g.
in \cites{Martikainen-Oikari-2024, Oik2022bipar, AHLMO2021}. This is the first time it is used
in an entangled situation, but the Zygmund AWF needed here is not particularly hard. The most interesting part
regarding the Zygmund AWF is our use of a new, weaker than the tri-parameter,
notion of non-degeneracy that allows us to run the AWF
only in Zygmund rectangles. Once the relatively routine AWF is done, the necessity result in the diagonal takes a line or two.
This is because $\bmo_Z$ is a space defined using oscillations -- that is the language that the AWF method understands.
However, we have to separately characterize the H\"older-continuous functions in the third variable
with Zygmund oscillations to handle the off-diagonal $p < q$. The sufficiency in the case $p=q$ was already proved
in \cite{ALM24}, but we prove the sufficiency in the $p<q$ case in this paper -- it is a bit tricky partly because
it appears that a pointwise
representation of the commutator $[b,T]$ is non-trivial to prove in the Zygmund setting
(see, for instance, Section 6 in \cite{AHLMO2021} to understand what goes into proving
such pointwise identities in the more classical situations; such tools are missing here).

The compactness results also rely on AWF, mostly via the refined scheme of the recent paper
\cite{Martikainen-Oikari-2024} (see also \cite{HOS2023}). This proof strategy yields that Zygmund oscillations have to vanish
in multiple situations -- one could then easily be tempted to define some kind of $\VMO$ space using these conditions,
they are so natural.
But it turns out that the conditions are such that they force $b$ to be a constant. This is quite easy
to deduce from the conditions
in the bi-parameter situation but somewhat trickier in the Zygmund case.

\subsection*{Notation} Throughout this paper,
$\langle f\rangle_E:= \frac 1{|E|}\int_E f$ for any measurable set $E$ of finite measure.
We also frequently use the notation $A\lesssim B$, which means that
$A\le CB$ for some irrelevant  constant $C>0$.
If $A\lesssim B$ and $B\lesssim A$ hold simultaneously, we simply denote it by $A\sim B$.

\subsection*{Acknowledgements}
K. Li was supported by the National Natural Science Foundation of China through
project numbers 12222114 and 12001400. This material is based upon work supported
by the National Science Foundation (NSF) under Grant No. 2247234 (H. Martikainen).
H.M. was, in addition, supported by the Simons Foundation through MP-TSM-00002361
(travel support for mathematicians).

H.M. thanks Tuomas Oikari and Tuomas Hyt\"onen for useful conversations.

\section{Approximate weak factorization methods and necessity results}\label{sec:awf}
\subsection{Non-degenerate kernels}
We now introduce what we mean with non-degeneracy in the Zygmund dilation setting.
First, for these considerations and approximate weak factorization (AWF)
we are going to need only the following upper bounds for our
kernel $K \colon (\R^3 \times \R^3) \setminus \Delta \to \C$, where
$$
	\Delta := \{(x,y) \in \R^3 \times \R^3 \colon x_i = y_i \textup{ for some } i\}.
$$
Let $0 < \theta \le 1$ -- we assume the size estimate
$$
	|K(x,y)| \lesssim \size_Z(|x_1-y_1|, |x_2-y_2|, |x_3-y_3|),
$$
where
\begin{align*}
	\size_Z(t_1, t_2, t_3)    & := \frac{D_{\theta}(t_1, t_2, t_3)}{t_1t_2t_3};                  \\
	D_{\theta}(t_1, t_2, t_3) & := \Big(\frac{t_3}{t_1t_2} + \frac{t_1t_2}{t_3} \Big)^{-\theta}.
\end{align*}
In addition, we will need the following weak variants of Zygmund type continuity assumptions.
Let $\omega \colon [0,1] \to [0, \infty)$ be increasing with $\omega(t) \to 0$ as $t \to 0$.
Let $|x_i - x_i'| \le |x_i-y_i| / 2$ for $i = 1,2,3$. Then we demand that
\begin{equation*}
	|K((x_1', x_2, x_3), y) - K(x,y)| \lesssim \omega\Big(\frac{|x_1-x_1'|}{|x_1-y_1|}\Big)
	\size_Z(|x_1-y_1|, |x_2-y_2|, |x_3-y_3|)
\end{equation*}
and
\begin{equation*}
	|K((x_1, x_2', x_3'), y) - K(x,y)| \lesssim
	\omega\Big(\frac{|x_2-x_2'|}{|x_2-y_2|} + \frac{|x_3-x_3'|}{|x_3-y_3|}\Big)
	\size_Z(|x_1-y_1|, |x_2-y_2|, |x_3-y_3|)
\end{equation*}
together with the symmetric conditions in the $y$-variable.
\begin{defn}[Non-degenerate Zygmund kernels]
	Let $K$ be a Zygmund invariant kernel on $\R^{3}$ as above.
	We say that $K$ is non-degenerate if
	for every $y \in\R^{3}$ and $\delta_1, \delta_2>0$ there exists
	$x \in \R^3$ with $|x_1-y_1| > \delta_1$, $|x_2-y_2| > \delta_2$
	and $|x_3-y_3| > \delta_1\delta_2$
	so that there holds
	\[
		|K(x, y)|\gtrsim \frac{1}{\delta_1^2\delta_2^2}.
	\]
\end{defn}
This is motivated by the following. Notice that
\begin{align*}
	\size_Z(t_1, t_2, t_3) & = \frac{1}{t_1t_2t_3} \Big(\frac{(t_1t_2)^2 + t_3^2}{t_1t_2t_3} \Big)^{-\theta} \\
	                       & = \frac{1}{(t_1^2t_2^2 + t_3^2)^{\theta}} \frac{1}{(t_1t_2t_3)^{1-\theta}}.
\end{align*}
So if $t_1 > \delta_1$, $t_2 > \delta_2$ and $t_3 > \delta_1\delta_2$, then
$$
	\size_Z(t_1, t_2, t_3) \le \frac{1}{(2\delta_1^2\delta_2^2)^{\theta}} \frac{1}{(\delta_1^2\delta_2^2)^{1-\theta}}
	\sim \frac{1}{\delta_1^2\delta_2^2}.
$$

Now, it follows that, in the definition of non-degeneracy, the point $x$ will automatically
satisfy $|x_1-y_1| \lesssim \delta_1$, $|x_2-y_2| \lesssim \delta_2$ and $|x_3-y_3| \lesssim \delta_1\delta_2$.
We show this next. First, to see $|x_1-y_1| \lesssim \delta_1$ we estimate
\begin{align*}
	\frac{1}{\delta_1^2\delta_2^2} & \lesssim |K(x,y)|                                                           \\
	                               & \lesssim \frac{1}{(|x_1-y_1|^2 \delta_2^2 + \delta_1^2\delta_2^2)^{\theta}}
	\frac{1}{(|x_1-y_1|\delta_1\delta_2^2)^{1-\theta}}                                                           \\
	                               & = \frac{1}{\delta_2^{2\theta}(|x_1-y_1|^2 + \delta_1^2)^{\theta}}
	\frac{1}{\delta_1^{1-\theta}\delta_2^{2}\delta_2^{-2\theta}|x_1-y_1|^{1-\theta}}                             \\
	                               & \sim \frac{1}{|x_1-y_1|^{2\theta}}
	\frac{1}{\delta_1^{1-\theta}\delta_2^{2}|x_1-y_1|^{1-\theta}}
	= \frac{1}{\delta_1^{1-\theta}\delta_2^{2}|x_1-y_1|^{1+\theta}}
\end{align*}
and notice that this implies $|x_1-y_1|^{1+\theta} \lesssim \delta_1^{1+\theta}$.
The proof of $|x_2-y_2| \lesssim \delta_2$ is symmetric. To see that $|x_3-y_3| \lesssim \delta_1\delta_2$
we notice that
\begin{align*}
	\frac{1}{\delta_1^2\delta_2^2} & \lesssim |K(x,y)|                                                \\
	                               & \lesssim \frac{1}{(\delta_1^2\delta_2^2 + |x_3-y_3|^2)^{\theta}}
	\frac{1}{\delta_1^{1-\theta}\delta_2^{1-\theta} |x_3-y_3|^{1-\theta}}                             \\
	                               & \sim \frac{1}{|x_3-y_3|^{2\theta}}
	\frac{1}{\delta_1^{1-\theta}\delta_2^{1-\theta} |x_3-y_3|^{1-\theta}}
	= \frac{1}{\delta_1^{1-\theta}\delta_2^{1-\theta} |x_3-y_3|^{1+\theta}}                           \\
\end{align*}
from which it follows that $|x_3-y_3|^{1+\theta} \lesssim \delta_1^{1+\theta}\delta_2^{1+\theta}$.

\subsection{Reflected Zygmund rectangles}
Let $I = I^1 \times I^2 \times I^3$ be a Zygmund rectangle in $\R^3$ --
that is, $\ell(I^3) = \ell(I^1)\ell(I^2)$. Let $c_{I^i}$ be the center of the interval $I^i$
and set $c_R = (c_{I^1}, c_{I^2}, c_{I^3})$. We use non-degeneracy with $y := c_R$,
$\delta_1 := A^{1/4}\ell(I^1)$ and $\delta_2 := A^{1/4}\ell(I^2)$, where
$A > 0$ is a large but at this point unspecified parameter. We find $x$
so that
$$
	|K(x, c_R)| \gtrsim \frac{1}{A} \frac{1}{\ell(I^1)^2\ell(I^2)^2} = \frac{1}{A}\frac{1}{\ell(I^1)\ell(I^2)\ell(I^3)}
	= \frac{1}{A|R|}.
$$
Let $\wt I^i$ be an interval centered at $c_{\wt I^i} := x_i$
and of side length $\ell(I^i)$. Then $c_{\wt R} = (c_{\wt I^1}, c_{\wt I^2}, c_{\wt I^3})$
is the center of the Zygmund rectangle $\wt R := \wt I^1 \times \wt I^2 \times \wt I^3$.
This is the reflected version of $R$. Notice that
\begin{align*}
	|c_{I^1} - c_{\wt I^1}| & \sim A^{1/4}\ell(I^1) \sim \dist(I^1, \wt I^1), \\
	|c_{I^2} - c_{\wt I^2}| & \sim A^{1/4}\ell(I^2) \sim \dist(I^2, \wt I^2), \\
	|c_{I^3} - c_{\wt I^3}| & \sim A^{1/2}\ell(I^3) \sim \dist(I^3, \wt I^3)
\end{align*}
and
$$
	|K(c_{\wt R}, c_R)| \sim \frac{1}{A|R|}.
$$

\begin{lem}
	Let $R$ be a Zygmund rectangle, $y \in R$ and $x \in \wt R$. Then we have
	(for some constant $C$) that
	$$
		|K(x,y) - K(c_{\wt R}, c_R)| \lesssim \omega(CA^{-1/4}) \frac{1}{A|R|}.
	$$
	In particular, for large $A$ we have
	$$
		\Big|\int_R K(x,y)\ud y \Big| \sim \int_R |K(x,y)| \ud y \sim \frac{1}{A}, \,\, x \in \wt R,
	$$
	and
	$$
		\Big|\int_{\wt R} K(x,y)\ud x \Big| \sim \int_{\wt R} |K(x,y)| \ud x \sim \frac{1}{A}, \,\, y \in R.
	$$
\end{lem}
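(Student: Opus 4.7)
The plan is to prove the pointwise comparison by a three-step telescoping decomposition and then derive the integral estimates as easy consequences. I would write
\begin{align*}
K(x,y) - K(c_{\wt R}, c_R) &= \bigl[K(x,y) - K(x, c_R)\bigr] \\
&\quad + \bigl[K(x, c_R) - K((c_{\wt I^1}, x_2, x_3), c_R)\bigr] \\
&\quad + \bigl[K((c_{\wt I^1}, x_2, x_3), c_R) - K(c_{\wt R}, c_R)\bigr],
\end{align*}
and control the last term by the second $x$-continuity assumption (joint change from $(c_{\wt I^2}, c_{\wt I^3})$ to $(x_2, x_3)$), the middle term by the first $x$-continuity assumption (single-coordinate change from $c_{\wt I^1}$ to $x_1$), and the first term by the symmetric $y$-continuity assumptions applied at the $y$-argument running from $c_R$ to $y \in R$.

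For each of these applications I would check the two inputs. The relative displacements are all $\lesssim A^{-1/4}$: indeed $|x_1 - c_{\wt I^1}| \le \ell(I^1)/2$ while $|x_1 - c_{I^1}| \sim A^{1/4}\ell(I^1)$, the analogous ratio in coordinate $2$ is again $\lesssim A^{-1/4}$, and in coordinate $3$ it is $\lesssim A^{-1/2}$ since $|c_{\wt I^3} - c_{I^3}| \sim A^{1/2}\ell(I^3)$. Hence each $\omega$-factor is dominated by $\omega(CA^{-1/4})$. For the size factor, in every application the triple $(t_1, t_2, t_3)$ appearing inside $\size_Z$ satisfies $t_1 \sim A^{1/4}\ell(I^1)$, $t_2 \sim A^{1/4}\ell(I^2)$, $t_3 \sim A^{1/2}\ell(I^3)$. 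The key point is that, because $\ell(I^3) = \ell(I^1)\ell(I^2)$ and $\delta_1,\delta_2$ were scaled by the same power $A^{1/4}$, one has $t_3 \sim t_1 t_2$, so $D_\theta(t_1, t_2, t_3) \sim 1$ and therefore $\size_Z(t_1, t_2, t_3) \sim 1/(t_1 t_2 t_3) \sim 1/(A|R|)$. Summing the three pieces gives the claimed pointwise bound. (One also has to verify the hypothesis $|x_i - x_i'| \le |x_i - y_i|/2$ in each application, but this is automatic once $A$ is chosen large enough, since the numerators are of order $\ell(I^i)$ and the denominators pick up the extra $A^{1/4}$ or $A^{1/2}$.)

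For the ``in particular'' part, combine the pointwise bound with the lower bound $|K(c_{\wt R}, c_R)| \sim 1/(A|R|)$ established just before the lemma. For $A$ so large that $\omega(CA^{-1/4})$ is smaller than a fixed fraction of this implicit constant, the reverse triangle inequality yields $|K(x,y)| \sim 1/(A|R|)$ uniformly on $\wt R \times R$, and moreover $K(x,y)/K(c_{\wt R}, c_R)$ stays within a small neighborhood of $1$, so there is no cancellation when integrating. Integrating over $R$ (for fixed $x \in \wt R$) or over $\wt R$ (for fixed $y \in R$) multiplies by $|R|$, yielding both the absolute and the signed integral estimates of order $1/A$.

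The main obstacle is really bookkeeping rather than analysis: ensuring that at each step of the telescope the base point still satisfies the Zygmund shape $t_3 \sim t_1 t_2$, so that the decay factor $D_\theta$ contributes nothing and $\size_Z \sim 1/(A|R|)$ uniformly. It is precisely the symmetric $A^{1/4}$-scaling of both $\delta_1$ and $\delta_2$ that makes this work; any asymmetric choice would degrade $D_\theta$ and destroy the bound.
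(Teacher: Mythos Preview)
Your proof is correct and follows essentially the same approach as the paper. The only cosmetic difference is the organization of the telescope: the paper moves $x \to c_{\wt R}$ first (in two steps) and then $y \to c_R$ (in two steps), giving four explicit terms, whereas you move $y$ first and collapse that part into a single line handled by both $y$-continuity assumptions; the size and $\omega$ bookkeeping, and the deduction of the integral estimates, are the same.
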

\begin{proof}
	Notice that
	\begin{align*}
		|K(x,y) - K(c_{\wt R}, c_R)| & \le |K(x,y) - K((c_{\wt I^1}, x_{23}), y)|             \\
		                             & + |K((c_{\wt I^1}, x_{23}), y)-K(c_{\wt R}, y)|        \\
		                             & + |K(c_{\wt R}, y)-K(c_{\wt R}, (c_{I^1}, y_{23}))|    \\
		                             & + |K(c_{\wt R}, (c_{I^1}, y_{23}))-K(c_{\wt R}, c_R)|,
	\end{align*}
	where, e.g., $x_{23} := (x_2, x_3)$ for $x = (x_1, x_2, x_3)$.
	We deal with the first term -- the other being similar. Notice that
	$|x_1 - c_{\wt I^1}| \lesssim \ell(\wt I^1) = \ell(I^1)$
	and $|x_1-y_1| \ge \dist(I^1, \wt I^1) \gtrsim A^{1/4}\ell(I^1)$
	implies $|x_1-y_1| \ge 2|x_1 - c_{\wt I^1}|$ for big $A$, and so by the continuity
	estimates
	\begin{align*}
		|K(x,y) - K((c_{\wt I^1}, x_{23}), y)| & \lesssim \omega\Big(C\frac{\ell(I^1)}{A^{1/4}\ell(I^1)}\Big)
		\size_Z(|x_1-y_1|, |x_2-y_2|, |x_3-y_3|)                                                              \\
		                                       & \lesssim \omega(CA^{-1/4}) \frac{1}{A|R|}.
	\end{align*}
	The other three terms are similar proving the desired estimate for $|K(x,y) - K(c_{\wt R}, c_R)|$.
	To see the other claims, fix $x \in \wt R$ and notice that
	$$
		\int_R K(x,y) \ud y = |R| K(c_{\wt R}, c_R) + \int_R [K(x,y) - K(c_{\wt R}, c_R)]\ud y
	$$
	implies
	$$
		\int_R |K(x,y)| \ud y \ge \Big| \int_R K(x,y) \ud y\Big| \ge \frac{C_0}{A} - C_1\omega(CA^{-1/4}) \frac{1}{A}
		\gtrsim \frac{1}{A}
	$$
	if we choose $A$ so large that $\omega(CA^{-1/4})$ is small enough for the above estimate.
	On the other hand, it is trivial that
	$$
		\int_R |K(x,y)| \ud y \lesssim \int_R \frac{\ud y}{A|R|} = \frac{1}{A}.
	$$
	The other case is symmetric, and so we are done.
\end{proof}

\subsubsection{AWF for Zygmund rectangles}
In what follows (for the AWF considerations and also for the necessity results
for commutator boundedness) we do not really need a Zygmund singular integral.
Rather, we only need a non-degenerate kernel $K$ and then $T$ and $T^*$ are simply notation for the integrals
\begin{align*}
	Th(x) = \int_{\R^3} K(x, y) h(y)\ud y, \\
	T^* h(x) = \int_{\R^3} K(y, x)  h(y)\ud y.
\end{align*}
We only use these integrals in situations where there is plenty of separation,
such as, $x \in \wt R$ but $h$ is supported on $R$ (and so $y \in R$). That is,
we use these in off-diagonal situations.

\begin{lem}[One iteration AWF]
	Let $K$ be a non-degenerate Zygmund invariant kernel.
	Let $R = I^1 \times I^2 \times I^3$ be a Zygmund rectangle and $f \in L^1$ be a function
	that is supported on $R$ with $\int_R f = 0$. Then
	$$
		f = h_R T^* 1_{\wt R} - 1_{\wt R} Th_R + e_{\wt R},
	$$
	where
	\begin{enumerate}
		\item $|h_R| \lesssim A|f|$,
		\item $|e_{\wt R}| \lesssim \omega(CA^{-1/4})\langle |f| \rangle_R 1_{\wt R}$,
		\item $\int e_{\wt R} = 0$.
	\end{enumerate}
\end{lem}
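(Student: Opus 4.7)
The plan is to construct $h_R$ by an explicit pointwise division so that the identity $h_R T^* 1_{\wt R} = f$ holds on $R$, and then to define the error as $e_{\wt R} := 1_{\wt R} T h_R$, so that the claimed decomposition comes out by trivial bookkeeping. All three properties will then follow from the preceding lemma, which says that $T^* 1_{\wt R}$ is essentially a non-vanishing constant of size $\sim 1/A$ on $R$, together with the mean-zero hypothesis $\int f = 0$.

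Concretely, the preceding lemma gives $|T^* 1_{\wt R}(y)| \sim 1/A$ for $y \in R$ once $A$ is taken large enough that $\omega(CA^{-1/4})$ is small. I would therefore set
$$
h_R(y) := \frac{f(y)}{T^* 1_{\wt R}(y)} 1_R(y),
$$
which is well-defined and immediately satisfies $|h_R| \lesssim A|f|$, i.e.\ (1). With $e_{\wt R} := 1_{\wt R} T h_R$ (automatically supported on $\wt R$) the identity $f = h_R T^* 1_{\wt R} - 1_{\wt R} T h_R + e_{\wt R}$ is verified by splitting $\R^3$ into the three regions $R$, $\wt R$, and the rest, using $R \cap \wt R = \emptyset$ for $A$ large (this follows from the $A^{1/4}\ell(I^i)$-separation built into the reflected construction). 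In each region two of the three summands vanish identically and the third equals $f$ by construction.

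Property (3) is immediate from Fubini and the vanishing of $f$: by the very definition of $h_R$,
$$
\int e_{\wt R} = \int_{\wt R} T h_R = \int_R h_R \cdot T^* 1_{\wt R} \ud y = \int_R f \ud y = 0.
$$
The same identity, divided by $|\wt R|$, also yields $\langle T h_R \rangle_{\wt R} = 0$, which is the input I would use for (2). For $x \in \wt R$ one then writes
$$
e_{\wt R}(x) = T h_R(x) - \langle T h_R \rangle_{\wt R} = \frac{1}{|\wt R|} \int_{\wt R} \int_R [K(x,y) - K(x',y)] h_R(y) \ud y \ud x'
$$
and controls the inner difference via the uniform bound $|K(x,y) - K(x',y)| \lesssim \omega(CA^{-1/4})/(A|R|)$ provided by the preceding lemma (applied once to $K(x,y) - K(c_{\wt R}, c_R)$ and once to $K(x',y) - K(c_{\wt R}, c_R)$). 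Combined with $\int_R |h_R| \lesssim A \int_R |f|$, this gives $|e_{\wt R}(x)| \lesssim \omega(CA^{-1/4}) \langle |f| \rangle_R$ uniformly on $\wt R$, which is (2).

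The real design decision here is how to define $h_R$. Once one notices that demanding $h_R T^* 1_{\wt R} = f$ on $R$ automatically forces the single cancellation $\int_R h_R \cdot T^* 1_{\wt R} = 0$, both the mean-zero property (3) and the oscillation estimate (2) drop out of the same Fubini line. The rest — checking that the three pieces tile $\R^3$ and that $A$ can be chosen so large that $T^* 1_{\wt R}$ stays uniformly away from zero on $R$ — is purely mechanical, being already encoded in the reflected-rectangle lemma proved just above.
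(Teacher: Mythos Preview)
Your proof is correct and follows the same construction as the paper: identical definitions of $h_R = f/T^*1_{\wt R}$ and $e_{\wt R} = 1_{\wt R} Th_R$, and the same Fubini argument for (3). For property (2) the paper instead decomposes $h_R(y)$ as a telescoping sum through the constant $f(y)/(|R|K(c_{\wt R},c_R))$ and uses $\int f = 0$ to insert $K(c_{\wt R},c_R)$ into the $y$-integral; your route via $\langle Th_R\rangle_{\wt R}=0$ and the double-integral difference $K(x,y)-K(x',y)$ is a cleaner repackaging of the same ingredients (the preceding lemma's bound on $|K(x,y)-K(c_{\wt R},c_R)|$ plus the mean-zero of $f$), and arguably makes the cancellation mechanism more transparent.
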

\begin{proof}
	Define
	$$
		h_R := \frac{f}{T^* 1_{\wt R}}
	$$
	and simply write
	$$
		f = h_R T^* 1_{\wt R} = h_R T^* 1_{\wt R} - 1_{\wt R} T h_R + 1_{\wt R} T h_R
	$$
	so that $e_{\wt R} := 1_{\wt R} T h_R$. Notice that for $y \in \supp f \subset R$ we have
	$$
		|T^* 1_{\wt R}(y)| = \Big| \int_{\wt R} K(x, y) \ud x\Big| \sim \frac{1}{A}
	$$
	so that $h_R$ is not only well-defined but also $|h_R| \sim A|f|$.

	Next, for $x \in \wt R$ we have
	$$
		|e_{\wt R}(x)| = |T h_R(x)| = \Big|\int_R K(x,y)h_R(y) \ud y \Big|,
	$$
	where, for $y \in R$ we have
	$$
		h_R(y) = \frac{f(y)}{\int_{\wt R} K(x,y) \ud x} - \frac{f(y)}{\int_{\wt R} K(c_{\wt R},c_R) \ud x}
		+ \frac{f(y)}{|R|K(c_{\wt R},c_R)}.
	$$
	Notice first that
	\begin{align*}
		\Big|\int_R K(x,y)\frac{f(y)}{|R|K(c_{\wt R},c_R)}  \ud y \Big|
		 & \sim A \Big| \int_R [K(x,y) - K(c_{\wt R}, c_R)]f(y) \ud y\Big| \\
		 & \lesssim \omega(CA^{-1/4}) \langle |f|\rangle_R.
	\end{align*}
	On the other hand, we have
	\begin{align*}
		\Big|\frac{f(y)}{\int_{\wt R} K(x,y) \ud x} - \frac{f(y)}{\int_{\wt R} K(c_{\wt R},c_R) \ud x}\Big|
		 & = \frac{|f(y)| \Big|\int_{\wt R} [K(x,y) - K(c_{\wt R}, c_R)] \ud x \Big|}
		{\Big|\int_{\wt R} K(x,y) \ud x \Big| |R||K(c_{\wt R},c_R)|}                  \\
		 & \lesssim \frac{|f(y)| \omega(CA^{-1/4}) \frac{1}{A}}{\frac{1}{A^2}}
		= \omega(CA^{-1/4}) A |f(y)|
	\end{align*}
	so that
	\begin{align*}
		\Big|\int_R K(x,y)\Big[\frac{f(y)}{\int_{\wt R} K(x,y) \ud x} -
			\frac{f(y)}{\int_{\wt R} K(c_{\wt R},c_R) \ud x} \Big]  \ud y \Big|
		\lesssim \omega(CA^{-1/4}) \langle |f|\rangle_R.
	\end{align*}
	We have shown $|e_{\wt R}| \lesssim \omega(CA^{-1/4})\langle |f| \rangle_R 1_{\wt R}$.

	Finally, notice that
	$$
		\int e_{\wt R} = \int 1_{\wt R} T\Big( \frac{f}{T^*1_{\wt R}}\Big)
		= \int T^* 1_{\wt R} \frac{f}{T^*1_{\wt R}} = \int f = 0.
	$$
\end{proof}
We want a few variants and corollaries of this for later use.
It is useful to iterate this one more time to make the error term $e$
be supported in $R$ instead of $\wt R$.
\begin{lem}[twice iterated AWF]
	Let $K$ be a non-degenerate Zygmund invariant kernel.
	Let $R = I^1 \times I^2 \times I^3$ be a Zygmund rectangle and $f \in L^1$ be a function
	that is supported on $R$ with $\int_R f = 0$. Then
	\begin{align*}
		f & = [h_R T^* 1_{\wt R} - 1_{\wt R} Th_R] +
		[h_{\wt R} T 1_{R} - 1_{R} T^*h_{\wt R}]
		+ e_{R},
	\end{align*}
	where
	\begin{enumerate}
		\item $|h_R| \lesssim A|f|$,
		\item $|h_{\wt R}| \lesssim A\langle |f|\rangle_R 1_{\wt R}$,
		\item $|e_{R}| \lesssim \omega(CA^{-1/4})\langle |f| \rangle_R 1_{R}$,
		\item $\int e_{R} = 0$.
	\end{enumerate}
\end{lem}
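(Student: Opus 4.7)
The plan is to iterate the one-iteration AWF twice. The first application to $f$ produces
\[ f = h_R T^* 1_{\wt R} - 1_{\wt R} T h_R + e_{\wt R}, \]
with $h_R$ and $e_{\wt R}$ already enjoying the appropriate bounds and $\int e_{\wt R} = 0$. Since $e_{\wt R}$ is supported on $\wt R$ and has mean zero, it is itself a legitimate input for the AWF machinery. I would run that machinery a second time with the roles of $R$ and $\wt R$ (and of $T$ and $T^*$) exchanged, so that the fresh error sits in $R$ rather than in $\wt R$. Substituting the resulting identity for $e_{\wt R}$ into the display above yields exactly the decomposition in the statement.

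The reason this role-swapped iteration is available is that every ingredient of the one-iteration lemma has a built-in symmetric counterpart. The lower bound $|T^* 1_{\wt R}(y)| \sim 1/A$ for $y\in R$ is paralleled by $|T 1_R(x)| \sim 1/A$ for $x\in \wt R$, which is exactly the companion claim of the preceding reflection lemma; and the kernel continuity hypotheses are posed symmetrically in the $x$- and $y$-variables. Concretely I would set
\[ h_{\wt R} := \frac{e_{\wt R}}{T 1_R}, \qquad e_R := 1_R T^* h_{\wt R}, \]
so that $e_{\wt R} = h_{\wt R} T 1_R - 1_R T^* h_{\wt R} + e_R$ tautologically.

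Verifying the four claims is then routine bookkeeping. Claim (1) is inherited from the first iteration. Claim (2) is immediate from $|T 1_R(x)| \sim 1/A$ on $\wt R$: pointwise,
\[ |h_{\wt R}| \lesssim A|e_{\wt R}| \lesssim A\omega(CA^{-1/4})\langle |f|\rangle_R 1_{\wt R} \lesssim A\langle |f|\rangle_R 1_{\wt R} \]
once $A$ is large enough that $\omega(CA^{-1/4}) \le 1$. Claim (3) is the same error calculation as in the one-iteration proof with the variables exchanged: splitting $h_{\wt R}(x)$ against the reference value $e_{\wt R}(x)/[|R|K(c_{\wt R},c_R)]$, estimating the difference via the continuity of $K$ in $y$, and using $\int e_{\wt R} = 0$ on the remaining piece (so that one can subtract $K(c_{\wt R},c_R)$ from $K(x,y)$ and exploit continuity in $x$) produces $|e_R| \lesssim \omega(CA^{-1/4})\langle |e_{\wt R}|\rangle_{\wt R}\, 1_R \lesssim \omega(CA^{-1/4})\langle |f|\rangle_R\, 1_R$. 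Finally, claim (4) is a one-line Fubini:
\[ \int e_R = \int 1_R T^* h_{\wt R} = \int (T 1_R)\, h_{\wt R} = \int e_{\wt R} = 0. \]
I do not foresee any genuine obstacle; the only conceptual point is recognizing that the symmetry of the kernel hypotheses is precisely what permits iterating ``backwards'' and placing the final error on $R$.
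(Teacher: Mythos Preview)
Your proposal is correct and follows exactly the approach of the paper, which simply says to apply the one-iteration AWF first to $f$ and then to $e_{\wt R}$ with $1_{\wt R}$ replaced by $1_R$ and $T$ by $T^*$. You have in fact supplied considerably more detail than the paper's one-sentence proof, and all of it checks out.
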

\begin{proof}
	Use the one iteration AWF first to $f$ like above, and then to
	$e_{\wt R}$ with $1_{\wt R}$ replaced by $1_R$ and $T$ with $T^*$.
\end{proof}

Next, we will bound oscillations
$$
	\osc(b, R) := \frac{1}{|R|}\int_R |b-\langle b \rangle_R|,
$$
where $R$ is a Zygmund rectangle, using the AWF.
\begin{lem}\label{lem:AWFCorBdd}
	Let $K$ be a non-degenerate Zygmund invariant kernel.
	Let $b \in L^{1}_{\loc}$ and the constant $A$ be large enough. Then for all
	Zygmund rectangles $R$ there exists functions
	$$
		\varphi_{R, 1}, \, \varphi_{R,2}, \,
		\psi_{\wt R, 1}, \, \psi_{\wt R, 2}
	$$
	so that for $i = 1,2$ we have
	$$
		|\varphi_{R, i}|\lesssim 1_{R} \qquad \textup{and} \qquad |\psi_{\wt R,i}|\lesssim |R|^{-1}1_{\wt R}
	$$
	and
	$$
		\osc(b, R) \lesssim \sum_{i=1}^2 |\langle [b,T]\varphi_{R,i}, \psi_{\wt R, i}\rangle|.
	$$
\end{lem}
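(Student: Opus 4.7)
The strategy is to test the oscillation against a mean-zero bump adapted to $R$, apply the twice-iterated AWF from the previous lemma, and use duality to convert each factorization bracket into a commutator pairing of the required form. The error term will be a small multiple of $\osc(b,R)$ itself and will be absorbed by choosing $A$ large.

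\textbf{Step 1: dualizing the oscillation.} Let $s := \sgn(b-\langle b\rangle_R)$ and set
$$
f := \frac{1}{|R|}\bigl(s - \langle s\rangle_R\bigr)1_{R}.
$$
Then $\supp f \subset R$, $\int f = 0$, $|f|\le 2/|R|$, and a direct computation gives $\int f b = \osc(b,R)$.

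\textbf{Step 2: applying the twice-iterated AWF.} Applying the previous lemma to $f$, we obtain
$$
f = \bigl[h_{R}\,T^{*}1_{\wt R} - 1_{\wt R}\,Th_{R}\bigr] + \bigl[h_{\wt R}\,T 1_{R} - 1_{R}\,T^{*}h_{\wt R}\bigr] + e_{R},
$$
with $|h_R|\lesssim A/|R|$, $|h_{\wt R}|\lesssim (A/|R|)1_{\wt R}$, $|e_R|\lesssim \omega(CA^{-1/4})|R|^{-1}1_R$ and $\int e_R = 0$.

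\textbf{Step 3: reading off commutator pairings.} Pair both sides with $b$. By Fubini,
$$
\int b\bigl(h_{R}T^{*}1_{\wt R} - 1_{\wt R}Th_{R}\bigr) = \int 1_{\wt R}\bigl(T(bh_R) - bTh_R\bigr) = -\langle [b,T]h_{R},\,1_{\wt R}\rangle,
$$
and similarly
$$
\int b\bigl(h_{\wt R}T 1_{R} - 1_{R}T^{*}h_{\wt R}\bigr) = -\langle [b,T^{*}]h_{\wt R},\,1_{R}\rangle = \langle [b,T]1_{R},\,h_{\wt R}\rangle,
$$
using $[b,T^{*}] = -[b,T]^{*}$ (WLOG $b$ is real-valued, as otherwise one argues on real and imaginary parts separately). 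Setting
$$
\varphi_{R,1} := -|R|\,h_R,\qquad \psi_{\wt R,1} := |R|^{-1}1_{\wt R},\qquad \varphi_{R,2}:=1_R,\qquad \psi_{\wt R,2}:=h_{\wt R},
$$
the required pointwise bounds $|\varphi_{R,i}|\lesssim 1_R$ and $|\psi_{\wt R,i}|\lesssim |R|^{-1}1_{\wt R}$ hold (the implicit constants depending only on the fixed parameter $A$), and the two bracketed contributions equal $\langle [b,T]\varphi_{R,i},\psi_{\wt R,i}\rangle$ for $i=1,2$.

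\textbf{Step 4: absorbing the error.} Since $\int e_R = 0$,
$$
\Big|\int e_R b\Big| = \Big|\int e_R(b-\langle b\rangle_R)\Big| \le \|e_R\|_{\infty}\int_R |b-\langle b\rangle_R| \lesssim \omega(CA^{-1/4})\,\osc(b,R).
$$
Combining,
$$
\osc(b,R) = \int fb \le \sum_{i=1}^{2}\bigl|\langle [b,T]\varphi_{R,i},\psi_{\wt R,i}\rangle\bigr| + C_0\,\omega(CA^{-1/4})\,\osc(b,R),
$$
and fixing $A$ so large that $C_0\omega(CA^{-1/4})\le 1/2$ lets us absorb the last term into the left-hand side.

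\textbf{Main obstacle.} The only subtle point is the absorption: the error from one application of AWF lives on $\wt R$, and there $\int e_{\wt R} b = \int e_{\wt R}(b-\langle b\rangle_{\wt R})$ would be controlled by $\osc(b,\wt R)$, a \emph{different} rectangle from $R$, which cannot be absorbed. This is precisely why the twice-iterated AWF is required: it yields $e_R$ supported back on $R$ with $\int e_R = 0$, so the cancellation pairs against $\osc(b,R)$, the very quantity being estimated, and the smallness $\omega(CA^{-1/4})$ closes the argument.
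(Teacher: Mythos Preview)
Your proof is correct and follows essentially the same approach as the paper: dualize the oscillation against a mean-zero bump on $R$, apply the twice-iterated AWF, recognize the two main brackets as commutator pairings, and absorb the small error term using $\int e_R = 0$ and the choice of $A$. The only cosmetic difference is normalization---the paper takes $f$ with $\|f\|_{L^\infty}\le 1$ (so that $|R|\osc(b,R)\lesssim|\langle b,f\rangle|$) and then divides by $|R|$ at the end when defining the $\psi$'s, whereas you build the factor $|R|^{-1}$ into $f$ from the start; the resulting test functions differ only by where the harmless factor $|R|$ sits.
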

\begin{proof}
	Choose $f$ with $\|f\|_{L^{\infty}} \le 1$, $\supp f \subset R$ and $\int_R f = 0$ so that
	$$
		|R|\osc(b, R) =
		\int_R |b-\langle b \rangle_R| \lesssim |\langle b, f\rangle|.
	$$
	Write using the twice iterated AWF that
	$$
		\langle b, f\rangle = -\langle [b, T]h_R, 1_{\wt R}\rangle
		+ \langle [b,T]1_R, h_{\wt R}\rangle
		+ \langle b, e_R\rangle,
	$$
	where
	\begin{align*}
		|\langle b, e_R\rangle| & \lesssim \omega(CA^{-1/4}) \langle |f| \rangle_R \int_R |b-\langle b \rangle_R| \\
		                        & \le \omega(CA^{-1/4}) |R| \osc(b,R).
	\end{align*}
	This shows that
	\begin{align*}
		\osc(b,R) & \lesssim |\langle [b, T]h_R, |R|^{-1}1_{\wt R}\rangle| \\
		          & + |\langle [b,T]1_R, |R|^{-1}h_{\wt R}\rangle|
		+ \omega(CA^{-1/4}) \osc(b,R).
	\end{align*}
	Fixing $A$ large enough we get
	$$
		\osc(b,R) \lesssim |\langle [b, T]\varphi_{R, 1}, \psi_{\wt R, 1}\rangle|
		+ |\langle [b,T]\varphi_{R, 2}, \psi_{\wt R, 2}\rangle|,
	$$
	where
	\begin{align*}
		\varphi_{R, 1} := h_R \qquad                 & \textup{and} \qquad \varphi_{R, 2} := 1_R, \\
		\psi_{\wt R, 1} := |R|^{-1} 1_{\wt R} \qquad & \textup{and} \qquad \psi_{\wt R, 2}
		:= |R|^{-1} h_{\wt R}.
	\end{align*}
\end{proof}
The constant $A$ is now \textbf{fixed} large enough so that conclusions like above hold.

\subsection{Necessity results for boundedness}
We define an off-diagonal constant that can be used instead of the full
norm of the commutator $[b,T]$. That is, for these boundedness considerations
we only need the kernel $K$ and the off-diagonal constants instead of
an actual SIO and the related commutator.
\begin{defn}
	For $u, t \in (1,\infty)$ and $B(x,y) = b(x)-b(y)$ define
	\begin{align*}
		\Off_{u}^{t} =  \sup \frac{1}{|P_1|^{1+1/u-1/t}}
		\Big| \iint_{\R^{3}\times \R^3} B(x,y)
		K(x,y) f_1(y)f_2(x) \ud y \ud x \Big|,
	\end{align*}
	where the supremum is taken over Zygmund rectangles $P_1 = J^1 \times J^2 \times J^3$
	and $P_2 = L^1 \times L^2 \times L^3$ with
	$$
		\ell(J^i) = \ell(L^i) \qquad \textup{and} \qquad
		\dist (J^i, L^i)\sim \ell(J^i)
	$$
	and over functions $f_i \in L^{\infty}(P_i)$ with $\|f_j\|_{L^{\infty}} \le 1$.
\end{defn}
\begin{lem}\label{lem:off_est}
	Let $K$ be a non-degenerate Zygmund invariant kernel and $b \in L^1_{\loc}$. Then we have
	$$
		\osc(b, R) \lesssim \Off_u^t |R|^{1/u-1/t}, \qquad 1 < u,t < \infty,
	$$
	for all Zygmund rectangles $R$.
\end{lem}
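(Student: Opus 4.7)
The plan is to feed the output of the twice-iterated AWF (Lemma~\ref{lem:AWFCorBdd}) directly into the definition of $\Off_u^t$, using the reflected pair $(R,\wt R)$ built in Section~2.2. The point is that this pair is automatically an admissible test pair for the off-diagonal constant, so this lemma really amounts to bookkeeping that translates oscillation bounds into off-diagonal testing.

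First, I would invoke Lemma~\ref{lem:AWFCorBdd} to obtain
$$
\osc(b,R)\lesssim \sum_{i=1}^{2}|\langle [b,T]\varphi_{R,i},\psi_{\wt R,i}\rangle|
$$
with $|\varphi_{R,i}|\lesssim 1_R$ and $|\psi_{\wt R,i}|\lesssim |R|^{-1}1_{\wt R}$, and then expand each pairing in the standard way as a double integral of $B(x,y)K(x,y)$ against the tensor product $\varphi_{R,i}(y)\psi_{\wt R,i}(x)$. Because the supports $R$ and $\wt R$ are disjoint this integral is unambiguous, and it is precisely the bilinear form controlled by $\Off_u^t$.

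Next, I would verify that setting $P_1:=R$ and $P_2:=\wt R$ produces an admissible pair. Matching side lengths is immediate from the reflection construction, and that construction furthermore provides $\dist(I^i,\wt I^i)\sim A^{1/4}\ell(I^i)$ for $i=1,2$ and $\dist(I^3,\wt I^3)\sim A^{1/2}\ell(I^3)$. Since $A$ is now a \emph{fixed} constant, each separation is $\sim \ell(I^i)$, exactly the condition appearing in the definition of $\Off_u^t$.

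The last step is the normalization. Choosing $f_1:=\varphi_{R,i}$ and $f_2:=|R|\psi_{\wt R,i}$ puts both functions in the unit ball of $L^\infty$ (up to an absolute constant) on $R$ and $\wt R$ respectively, so the definition of $\Off_u^t$ yields
$$
|\langle [b,T]\varphi_{R,i},\psi_{\wt R,i}\rangle| = \frac{1}{|R|}\Big|\iint B(x,y) K(x,y) f_1(y) f_2(x)\,\ud y\,\ud x \Big| \lesssim |R|^{1/u-1/t}\,\Off_u^t.
$$
Summing over $i=1,2$ gives the claim. There is really no obstacle: all the substantive content sits in the AWF construction and the reflected pair, and the present lemma is just the clean matching of the two formalisms, with the single bookkeeping detail being the factor $|R|$ that trades the $\|\psi_{\wt R,i}\|_\infty\lesssim |R|^{-1}$ estimate for the $|R|$-power in the final bound.
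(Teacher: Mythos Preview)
Your proof is correct and follows essentially the same approach as the paper: apply Lemma~\ref{lem:AWFCorBdd}, renormalize via $f_1=\varphi_{R,i}$ and $f_2=|R|\psi_{\wt R,i}$, and read off the bound from the definition of $\Off_u^t$. You supply a bit more detail (explicitly checking that $(R,\wt R)$ is an admissible pair since $A$ is fixed), but the argument is the same.
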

\begin{proof}
	Using AWF as above, estimate
	$$
		\osc(b, R) \lesssim \sum_{i=1}^2 |R|^{-1}|\langle [b,T]\varphi_{R,i}, |R|\psi_{\wt R, i}\rangle|,
	$$
	where $\|\varphi_{R, i}\|_{L^{\infty}} \lesssim 1$ and $\||R| \psi_{\wt R, i}\|_{L^{\infty}} \lesssim 1$,
	and the functions $\varphi_{R, i}, \psi_{\wt R, i}$ are supported on $R$ and $\wt R$, respectively.
	It follows directly from the definition that
	$$
		|R|^{-1}|\langle [b,T]\varphi_{R,i}, |R|\psi_{\wt R, i}\rangle| \lesssim \Off_u^t |R|^{1/u-1/t}
	$$
	so we are done.
\end{proof}

We now use this to quickly obtain a necessary condition for $\Off_p^p < \infty$,
in particular, for $\|[b, T]\|_{L^p \to L^p} < \infty$, $1 < p < \infty$.
Let $\calR_Z$ denote the collection of Zygmund rectangles.
Define
$$
	\|b\|_{\bmo_Z} := \sup_{R \in \calR_Z} \osc(b,R).
$$
\begin{thm}\label{thm:ppbddnec}
	Let $K$ be a non-degenerate Zygmund invariant kernel and $b \in L^1_{\loc}$. Then we have
	$$
		\|b\|_{\bmo_Z} \lesssim \Off_p^p, \qquad 1 < p < \infty.
	$$
\end{thm}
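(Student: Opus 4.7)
The plan is to observe that this theorem is an immediate corollary of Lemma \ref{lem:off_est}. Specializing the exponents in that lemma to $u = t = p$, the rectangle-dependent factor $|R|^{1/u-1/t}$ collapses to $|R|^0 = 1$, and we obtain
$$
\osc(b, R) \lesssim \Off_p^p
$$
with an implicit constant that is independent of the Zygmund rectangle $R \in \calR_Z$. Taking the supremum of the left-hand side over all $R \in \calR_Z$ yields the stated estimate $\|b\|_{\bmo_Z} \lesssim \Off_p^p$.

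All of the substantive work is already in place: the twice-iterated AWF reduces the oscillation $\osc(b,R)$ to pairings of $[b,T]$ against $L^\infty$-bounded functions supported on $R$ and $\wt R$, which satisfy the separation conditions built into the definition of $\Off_u^t$. There is no real obstacle at this step; the proof is literally one line invoking Lemma \ref{lem:off_est} followed by a supremum.

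For the introductory claim that this yields a necessary condition for the $L^p \to L^p$ boundedness of $[b,T]$, I would also record that $\Off_p^p \lesssim \|[b,T]\|_{L^p \to L^p}$. This is just Hölder's inequality: on the off-diagonal region $P_1 \times P_2$ the double integral equals $\langle [b,T] f_1, f_2 \rangle$ (since the kernel is integrable away from the diagonal and no principal value is needed), and $\|f_i 1_{P_i}\|_{L^{p_i}} \le |P_i|^{1/p_i}$ with $|P_1| = |P_2|$ produces the required normalization. Combined with Theorem \ref{thm:ppbddnec}, this gives $\|b\|_{\bmo_Z} \lesssim \|[b,T]\|_{L^p \to L^p}$, which is the necessity statement announced in the introduction.
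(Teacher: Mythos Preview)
Your proposal is correct and matches the paper's own proof essentially line for line: the paper simply invokes Lemma~\ref{lem:off_est} with $u=t=p$ to get $\osc(b,R)\lesssim \Off_p^p$ for every Zygmund rectangle $R$ and then takes the supremum. Your additional remark that $\Off_p^p \lesssim \|[b,T]\|_{L^p\to L^p}$ via H\"older is also correct and fills in the bridge to the commutator norm that the paper only states in passing.
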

\begin{proof}
	By the previous lemma
	$$
		\osc(b, R) \lesssim \Off_p^p
	$$
	for all Zygmund rectangles, and so we are done.
\end{proof}
The corresponding commutator upper bound for a Zygmund CZO $T$ was recently proved in \cite{ALM24}.
As weighted estimates with Zygmund weights fail for $T$ if $\theta < 1$, see \cite{HLMV}, this had
to be done via the representation theorem \cite{HLMV} and not using the usual Cauchy integral trick.
In any case, for non-denegerate Zygmund SIOs we have $\|[b, T]\|_{L^p \to L^p} \sim \|b\|_{\bmo_Z}$.
As we will later have to consider actual Zygmund SIOs/CZOs (and not just these off-diagonal constants
with very weak kernels like in this section otherwise), we will also later recall more carefully what
the Zygmund SIOs/CZOs that appear in these upper bounds are. In any case, boundedness in the diagonal
is completely characterized.

In the off-diagonal situation $L^p \to L^q$, $q > p$, we require the following space.
Define
\begin{align*}
	\|b\|_{\bmo_Z^{\alpha}} & := \sup_{R\in \mathcal R_Z} \calO_{\alpha}(b,R), \\
	\calO_{\alpha}(b,R)     & := |R|^{-\alpha} \osc(b, R).
\end{align*}
\begin{thm}\label{thm:equidefoff}
	Let $K$ be a non-degenerate Zygmund invariant kernel and $b \in L^1_{\loc}$. Then we have
	$$
		\|b\|_{\bmo_Z^{{\alpha}}} \lesssim \Off_p^q, \qquad 1 < p < q < \infty,
	$$
	where
	$$
		\alpha := \frac{1}{p} - \frac{1}{q} > 0.
	$$
	Moreover, we have the following characterization of $\bmo_Z^{\alpha}$ for any
	$\alpha > 0$. If $b \in L^s_{\loc}$ for $s > 1$ then there holds that
	$$
		\|b\|_{\bmo_Z^{\alpha}}
		\sim \sup_{x, y \colon x_3 \ne y_3} \frac{|b(x)-b(y)|}{|x_3-y_3|^{2\alpha}}.
	$$
\end{thm}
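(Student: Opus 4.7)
The bound $\|b\|_{\bmo_Z^{\alpha}} \lesssim \Off_p^q$ is immediate from Lemma \ref{lem:off_est} with $u = p$, $t = q$: one gets $\osc(b, R) \lesssim \Off_p^q |R|^{1/p - 1/q} = \Off_p^q |R|^{\alpha}$ for every Zygmund rectangle, i.e.\ $\calO_{\alpha}(b, R) \lesssim \Off_p^q$. The main content of the theorem is thus the H\"older characterization. The driving identity is that any Zygmund rectangle $R = I^1 \times I^2 \times I^3$ satisfies $|R| = \ell(I^3)^2$, whence $|R|^{\alpha} = \ell(I^3)^{2\alpha}$. The easy direction is then a one-line computation: if $b(x) = g(x_3)$ with $g$ being $2\alpha$-H\"older, $\osc(b, R)$ reduces to the one-dimensional oscillation of $g$ on $I^3$, which is at most $\|g\|_{C^{2\alpha}}\,\ell(I^3)^{2\alpha} \sim |R|^{\alpha}$.

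For the reverse direction I plan a Campanato-type telescoping. The hypothesis $b \in L^s_{\loc}$ with $s > 1$, together with the $L^s$-boundedness of the Zygmund strong maximal function $M_Z$ (Fefferman--Pipher), yields a Zygmund differentiation theorem: for a.e.\ $x$, $\langle b\rangle_R \to b(x)$ along any sequence of Zygmund rectangles $R \ni x$ with $\diam R \to 0$. Starting from any Zygmund rectangle $R^{(0)} \ni x$, I build a nested sequence $R^{(k+1)} \subset R^{(k)}$ of Zygmund rectangles via Zygmund dyadic subdivision -- halving $I^1, I^2$ and quartering $I^3$ yields $16$ Zygmund children of volume $|R^{(k)}|/16$ -- chosen so that $x \in R^{(k)}$ for every $k$ and $\diam R^{(k)} \to 0$. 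Each telescope step is controlled by $|\langle b\rangle_{R^{(k+1)}} - \langle b\rangle_{R^{(k)}}| \le 16\,\osc(b, R^{(k)}) \le 16\,\|b\|_{\bmo_Z^{\alpha}}|R^{(k)}|^{\alpha}$, and summing the resulting geometric series in $16^{-k\alpha}$ (convergent since $\alpha > 0$) gives
\[
|b(x) - \langle b\rangle_{R^{(0)}}| \lesssim \|b\|_{\bmo_Z^{\alpha}}\,|R^{(0)}|^{\alpha}.
\]
The triangle inequality then yields $|b(x) - b(y)| \lesssim \|b\|_{\bmo_Z^{\alpha}}\,|R^{(0)}|^{\alpha}$ whenever $x, y$ are Lebesgue points in a common Zygmund rectangle $R^{(0)}$.

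The characterization is now extracted by appropriate choices of $R^{(0)}$. For two Lebesgue points $x = (a_1, a_2, x_3)$ and $y = (a_1', a_2, x_3)$ differing only in the first coordinate, I enclose them in a Zygmund rectangle with $\ell(I^1) \sim |a_1 - a_1'|$, $\ell(I^2) = \epsilon$ centered at $a_2$, and $\ell(I^3) = \ell(I^1)\ell(I^2) \sim |a_1 - a_1'|\,\epsilon$ centered at $x_3$; since $|R^{(0)}|^{\alpha} = \ell(I^3)^{2\alpha} \to 0$ as $\epsilon \to 0$, this forces $b(x) = b(y)$. The symmetric argument in $x_2$ and composition then show that $b$ agrees a.e.\ with a function $g(x_3)$ depending only on the third variable. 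Finally, for $x_3 \ne y_3$ and Lebesgue points $(a_1, a_2, x_3), (a_1, a_2, y_3)$, I enclose them in a Zygmund rectangle with $\ell(I^1) = \ell(I^2) = \sqrt{|x_3 - y_3|}$ and $\ell(I^3) \sim |x_3 - y_3|$, producing $|g(x_3) - g(y_3)| \lesssim \|b\|_{\bmo_Z^{\alpha}}|x_3 - y_3|^{2\alpha}$. The main technical subtlety is the Zygmund differentiation step: the relevant shrinking rectangles have unbounded eccentricity, so the argument truly requires $s > 1$ -- this is where the $L^s$-boundedness of $M_Z$ holds, and which would indeed fail at $s = 1$.
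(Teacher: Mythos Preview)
Your proof is correct and follows essentially the same Campanato-telescoping approach as the paper: the key identity $|R|=\ell(I^3)^2$, nested Zygmund rectangles shrinking to a point, the geometric sum of oscillations, and Lebesgue differentiation via $L^s$ maximal bounds are all exactly what the paper does. The only organizational difference is that the paper first establishes the intermediate two-term estimate $|b(x)-b(y)|\lesssim \|b\|_{\bmo_Z^\alpha}\big(|x_1-y_1|^{2\alpha}|x_2-y_2|^{2\alpha}+|x_3-y_3|^{2\alpha}\big)$ for Lebesgue points and then carefully redefines $b$ on the null set of non-Lebesgue points to upgrade this to all $x,y$ before deducing constancy in $x_1,x_2$, whereas you proceed more directly to constancy via $\epsilon\to 0$ and leave the null-set redefinition implicit; this is a detail you should spell out, but it is routine.
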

\begin{proof}
	We only need to prove the characterization of $\bmo_Z^{\alpha}$ as the first claim
	directly follows from Lemma \ref{lem:off_est}.

	Denote
	$$
		C :=  \sup_{x, y \colon x_3 \ne y_3} \frac{|b(x)-b(y)|}{|x_3-y_3|^{2\alpha}}.
	$$
	Notice that for a Zygmund rectangle $R = I^1 \times I^2 \times I^3$
	we have for $x \in R$ that
	$$
		|b(x)-\langle b \rangle_R| \le C\ell(I^3)^{2\alpha} = C|R|^{\alpha},
	$$
	and so $\osc(b, R) \le C|R|^{\alpha}$ showing that $\|b\|_{\bmo_Z^{\alpha}} \le C$.

	We first aim to show the, perhaps weaker appearing, claim
	\begin{equation}\label{eq:weakerHolder}
		|b(x)-b(y)| \lesssim \|b\|_{\bmo_Z^{\alpha}}\big(|x_1-y_1|^{2\alpha} |x_2-y_2|^{2\alpha}
		+ |x_3-y_3|^{2\alpha}\big).
	\end{equation}
	To do this we fix $x,y$. We begin by showing that
	if $|x_3-y_3| \le |x_1-y_1||x_2-y_2|$ then
	$$
		|b(x)-b(y)| \lesssim \|b\|_{\bmo_Z^{\alpha}} |x_1-y_1|^{2\alpha} |x_2-y_2|^{2\alpha}.
	$$
	To this end, we define, given $\eps > 0$, that
	$$
		r_1 := \max(|x_1-y_1|, \eps), \,\,\, r_2 := \max(|x_2-y_2|, \eps), \,\,\, r_3 := r_1r_2.
	$$
	(The $\eps > 0$ is just so that we do not have to deal with subcases like $x_1 = y_1$ here.)
	We will be defining a sequence of Zygmund rectangles $I_k$ and $J_k$, $k = 1,2,\ldots$,
	centered at $x$ and $y$, respectively, so that $I_{k+1} \subset I_k$, $J_{k+1} \subset J_k$
	and the side lengths converge to zero. In addition, we will define
	a top Zygmund rectangle $L$ so that $I_1 \subset L$ and $J_1 \subset L$,
	and we will set $I_0 = J_0 := L$.
	In detail, let $L$ be the Zygmund rectangle centered
	at $((x_1+y_1)/2,(x_2+y_2)/2, (x_3+y_3)/2)$ with side lengths
	$2r_1$, $2r_2$ and $4r_3$.
	We then define the Zygmund rectangle
	$I_k = I_k^1 \times I_k^2 \times I_k^3$, $k = 1,2, \ldots$, by setting
	\begin{align*}
		I_k^1 & := (x_1 - 2^{-k}r_1, x_1 + 2^{-k}r_1),             \\
		I_k^2 & := (x_2 - 2^{-k}r_2, x_2 + 2^{-k}r_2),             \\
		I_k^3 & := (x_3 - 2^{-2k+1}r_1r_2, x_3 + 2^{-2k+1}r_1r_2).
	\end{align*}
	Define $J_k$ analogously.
	Next, we record an easy general fact which shows that, indeed, $I_1, J_1 \subset L$.
	If $r > 0$ and $z, w \in \R^d$ are such that $|z-w| \le r$, then
	the balls
	$$
		B := B(z,r/2) \qquad \textup{and} \qquad \wt B := B\Big(\frac{z+w}{2}, r \Big)
	$$
	satisfy $B \subset \wt B$.

	Recall that in this part of the proof we are assuming $b \in L^s_{\loc}$ for $s > 1$.
	We have the Lebesgue's differentiation theorem over rectangles by
	the boundedness of the tri-parameter maximal function
	on $L^s$, and so
	$$
		b(x) = \lim_{k \to \infty} \langle b \rangle_{I_k}
		= \sum_{k=1}^\infty ( \langle b\rangle_{I_{k}}- \langle b\rangle_{I_{k-1}} )+ \langle b\rangle_{L}
	$$
	if $x$ is a Lebesgue point of $b$, in particular, almost everywhere.
	Assuming $x,y$ are such Lebesgue points we then have the above
	identity and
	$$
		b(y) = \lim_{k\to \infty} \langle b\rangle_{J_k}
		= \sum_{k=1}^\infty ( \langle b\rangle_{J_{k}}- \langle b\rangle_{J_{k-1}} )+ \langle b\rangle_{L}.
	$$
	Subtracting these identities gives
	\begin{align*}
		|b(x)-b(y)| & \le \sum_{k=1}^\infty | \langle b\rangle_{I_{k}}- \langle b\rangle_{I_{k-1}} |+ \sum_{k=1}^\infty | \langle b\rangle_{J_{k}}- \langle b\rangle_{J_{k-1}} | \\
		            & \lesssim  \sum_{k=1}^\infty \big(\osc(b, I_{k-1})+\osc(b, J_{k-1})\big)
		\lesssim  \|b\|_{\bmo_Z^{\alpha}} (r_1r_2)^{2\alpha},
	\end{align*}
	where we used that $|I_k| \sim 2^{-4k}(r_1r_2)^2$.
	Finally, letting $\eps \to 0$ we get
	\[
		|b(x)-b(y)|\lesssim \|b\|_{\bmo_Z^{\alpha}} |x_1-y_1|^{2\alpha}|x_2-y_2|^{2\alpha}
	\]
	whenever $|x_3-y_3| \le |x_1-y_1||x_2-y_2|$ and $x,y$ are, in addition, Lebesgue points.

	We then prove that, if $|x_1-y_1| |x_2-y_2| < |x_3-y_3|$ and $x,y$ are Lebesgue points, we have
	$$
		|b(x) - b(y)| \lesssim \|b\|_{\bmo_Z^{\alpha}} |x_3-y_3|^{2\alpha}.
	$$
	This is a quite similar argument as above. We first consider the case $x_1 = y_1$ and $x_2 = y_2$.
	Let $r := |x_3-y_3| > 0$. Define the Zygmund rectangle
	$I_k = I_k^1 \times I_k^2 \times I_k^3$, $k = 1,2, \ldots$, by setting
	\begin{align*}
		I_k^1 & := (y_1 - 2^{-k}r^{1/2}, y_1 + 2^{-k}r^{1/2}), \\
		I_k^2 & := (y_2 - 2^{-k}r^{1/2}, y_2 + 2^{-k}r^{1/2}), \\
		I_k^3 & := (x_3 - 2^{-2k+1}r, x_3 + 2^{-2k+1}r).
	\end{align*}
	Define $J_k$ analogously (the only difference being in $J_k^3$).
	Let $L$ be the Zygmund rectangle centered
	at $(y_1, y_2, (x_3+y_3)/2)$ with side lengths
	$(2r)^{1/2}$, $(2r)^{1/2}$ and $2r$, and set $I_0 = J_0 := L$. Repeating
	the argument from above with these choices then clearly gives
	$$
		|b(y_1, y_2, x_3) - b(y)| \lesssim \|b\|_{\bmo_Z^{\alpha}} r^{2\alpha}
	$$
	as desired. Suppose then $x_1 \ne y_1$ -- the case $x_2 \ne y_2$ is similar.
	The only difference is that now we use (in the notation of the first case)
	$r_1 = |x_1-y_1|$, $r_2 = |x_3-y_3|/|x_1-y_1|$ and $r_3 = r_1r_2 = |x_3-y_3|$.
	Again clear that the same strategy yields
	$$
		|b(x)-b(y)| \lesssim \|b\|_{\bmo_Z^{\alpha}} (r_1r_2)^{2\alpha}
		= \|b\|_{\bmo_Z^{\alpha}} |x_3-y_3|^{2\alpha}.
	$$

	We have now proved \eqref{eq:weakerHolder} for Lebesgue points $x,y$.
	We upgrade this now to all $x,y$ by modifying the function $b$ on a set of measure zero.
	For any $z$ that is not a Lebesgue point of $b$, by the density of the Lebesgue points
	we find a sequence of Lebesgue points $x_n\to z$. Then \eqref{eq:weakerHolder} implies that
	\[
		|b(x_{m})-b(x_n)|\lesssim |x_{m}^1-x_n^1|^{2\alpha}|x_{m}^2-x_{n}^2 |^{2\alpha}+ |x_{m}^3-x_n^3|^{2\alpha},
	\]
	and so $\{b(x_{n})\}_n$ is a Cauchy sequence. We then simply redefine
	\[
		b(z):= \lim_{n\to \infty} b(x_n).
	\]
	This is well-defined, since if $y_n\to z$ is another sequence of Lebesgue points, then
	\[
		|b(x_{n})-b(y_n)|\lesssim |x_{n}^1-y_n^1|^{2\alpha}|x_{n}^2-y_{n}^2 |^{2\alpha}+ |x_{n}^3-y_n^3|^{2\alpha},
	\]
	and so
	\[
		\lim_{n\to \infty} b(x_n)=\lim_{n\to \infty} b(y_n).
	\]
	With this redefinition it is easy to check that \eqref{eq:weakerHolder} holds
	for all $x,y$.

	Now, write
	\[
		b(x)= (b(x)-b(y_1, x_2, x_3))+ (b(y_1, x_2, x_3)- b(y_1, y_2, x_3))+ b(y_1, y_2, x_3).
	\]
	The first two terms are zero by \eqref{eq:weakerHolder}. Thus, we have
	$$
		b(x) = b(y_1, y_2, x_3),
	$$
	i.e., $b$ is a constant in the first two variables. With this the claim
	follows from \eqref{eq:weakerHolder}.
\end{proof}
Unlike in the diagonal case, the corresponding upper bound in this off-diagonal case
has not appeared previously. We will prove it in the next section.

\section{Sufficiency result in the off-diagonal}
We begin recalling more carefully what is a Zygmund SIO/CZO. Usually, we refer
the term Zygmund SIO for an operator $T$ that has a Zygmund kernel with full and partial
kernel estimates, but may not satisfy the cancellation assumptions of \cite{HLMV}.
A Zygmund CZO then means an SIO that satisfies the cancellation assumptions of \cite{HLMV},
in particular, one that
is $L^p \to L^p$, $1 < p < \infty$, bounded. Here we just state the kernel estimates
of an SIO for reference (as they are used below)
and refer the reader to \cite{HLMV} for a deeper understanding of the CZO part
and what it does and does not imply (with optimal weighted estimates being the deepest part).

An SIO $T$ is related to a full kernel $K$ in the following way.
The kernel $K$ is  a function
$
	K \colon (\R^3 \times \R^3) \setminus \Delta \to \C,
$
where
$$
	\Delta=\{(x,y) \in \R^3 \times \R^3 \colon |x_1-y_1||x_2-y_2||x_3-y_3|=0\}.
$$
If $f = f_1 \otimes f_{23}$ and $g = g_1 \otimes g_{23}$ with $f_1, g_1 \colon \R \to \C$
and $f_{23}, g_{23} \colon \R^2 \to \C$ having disjoint supports, then
$$
	\ave {Tf,g}
	= \iint K(x,y) f(y) g(x) \ud y \ud x.
$$
The kernel $K$ satisfies the following estimates.

Let $(x,y) \in (\R^3 \times \R^3)\setminus \Delta$. First, we assume that $K$ satisfies the size estimate
\begin{equation*}
	|K(x,y)|
	\lesssim \size_Z(|x_1-y_1|, |x_2-y_2|, |x_3-y_3|),
\end{equation*}
see the beginning of Section \ref{sec:awf} for the definition of $\size_Z$ and recall
that it depends on the parameter $\theta \in (0, 1]$.
Let then $x'=(x_1',x_2',x_3')$ be such that $|x_i'-x_i| \le |x_i-y_i|/2$ for $i=1,2,3$. We assume that $K$ satisfies the mixed size and H\"older estimates
\begin{equation*}
	\begin{split}
		|K((x'_1,x_2,x_3),y)-K(x,y)|
		 & \lesssim  \Big(\frac{|x_1'-x_1|}{|x_1-y_1|}\Big)^{\alpha_1}
		\size_Z(|x_1-y_1|, |x_2-y_2|, |x_3-y_3|),
	\end{split}
\end{equation*}
and
\begin{equation*}
	\begin{split}
		|K((x_1 & ,x_2',x_3'),y)-K(x,y)|
		\\
		        & \lesssim  \Big(\frac{|x_2'-x_2|}{|x_2-y_2|}+ \frac{|x_3'-x_3|}{|x_3-y_3|} \Big)^{\alpha_{23}}
		\size_Z(|x_1-y_1|, |x_2-y_2|, |x_3-y_3|),
	\end{split}
\end{equation*}
where $\alpha_1, \alpha_{23} \in (0,1]$.
Finally, we assume that  $K$ satisfies the H\"older estimate
\begin{equation*}
	\begin{split}
		|K(x', & y)-K((x_1',x_2,x_3),y)-K((x_1,x_2',x_3'),y)+K(x,y)|                                          \\
		       & \lesssim  \Big(\frac{|x_1'-x_1|}{|x_1-y_1|}\Big)^{\alpha_1}\Big(\frac{|x_2'-x_2|}{|x_2-y_2|}
		+\frac{|x_3'-x_3|}{|x_3-y_3|}\Big)^{\alpha_{23}}
		\size_Z(|x_1-y_1|, |x_2-y_2|, |x_3-y_3|).
	\end{split}
\end{equation*}

Define the adjoint kernels $K^*$, $K^*_1$ and $K^*_{2,3}$ via the following relations:
\begin{equation*}
	K^*(x,y)=K(y,x), \quad
	K^*_1(x,y)=K((y_1,x_2,x_3),(x_1,y_2,y_3))
\end{equation*}
and
\begin{equation*}
	K^*_{2,3}(x,y)=K((x_1,y_2,y_3),(y_1,x_2,x_3)).
\end{equation*}
We assume that each adjoint kernel satisfies the same estimates as the kernel $K$.

We then define the partial kernels.
Related to every interval $I^1$ there exists a kernel
$$
	K_{I^1} \colon (\R^2 \times \R^2) \setminus \{(x_{2,3}, y_{2,3}) \colon |x_2-y_2||x_3-y_3|=0\} \to \C,
$$
so that if $f_{23}, g_{23} \colon \R^2 \to \C$ have disjoint supports, then
\begin{equation*}
	\langle T(1_{I^1} \otimes f_{23}), 1_{I^1} \otimes g_{23}\rangle
	= \iint K_{I^1}(x_{23}, y_{23}) f_{23}(y_{23})g_{23}(x_{23}) \ud y_{23} \ud x_{23}.
\end{equation*}
The kernel $K_{I^1}$ is assumed to satisfy the following estimates.
Let $x_{23}, y_{23}$ be such that $|x_2-y_2||x_3-y_3|\not=0$. First, we assume the size estimate
\begin{equation*}
	|K_{I^1}(x_{23}, y_{23}) |
	\lesssim \frac{|I^1|}{|x_2-y_2||x_3-y_3|} D_{\theta}(|I^1|, x_2-y_2,x_3-y_3).
\end{equation*}
See again the beginning of Section \ref{sec:awf} for the definition of $D_{\theta}$.
Let then $x'_{23}=(x_2',x_3')$ be such that $|x_i'-x_i| \le |x_i-y_i|/2$ for $i=2,3$.
We assume the H\"older estimate
\begin{equation*}
	\begin{split}
		 & |K_{I^1}(x_{23}',y_{23})-K_{I^1}(x_{23},y_{23})| \\
		 & \hspace{1cm}\lesssim
		\Big( \frac{|x_2'-x_2|}{|x_2-y_2|}+\frac{|x_3'-x_3|}{|x_3-y_3|} \Big)^{\alpha_{23}}\frac{|I^1|}{|x_2-y_2||x_3-y_3|}
		D_{\theta}(|I^1|, x_2-y_2,x_3-y_3).
	\end{split}
\end{equation*}
We assume that the adjoint kernel defined by $K_{I^1}^*(x_{2,3}, y_{2,3})
	=K_{I^1}(y_{2,3}, x_{2,3})$ satisfies the same estimates.

Now, we briefly consider the other partial kernel representation.
Related to every rectangle $I^{23}$ there exists a classical Calder\'on-Zygmund kernel $K_{I^{23}}$
with H\"older exponent $\alpha_{1}$ and kernel constant $\lesssim |I^{2,3}|$
so that if $f_1, g_1 \colon \R \to \C$ have disjoint supports, then
\begin{equation}\label{E:eq68}
	\langle T(f_1 \otimes 1_{I^{23}}), g_1 \otimes 1_{I^{23}} \rangle
	= \iint K_{I^{23}}(x_1,y_1) f_1(y_1) g_1(x_1) \ud y_1 \ud x_1.
\end{equation}

See \cite{HLMV} to see that all previously known Zygmund type
SIOs/CZOs fall into this general framework.
We do not need all of these kernel estimates below. In addition, the CZO
assumption could here be replaced by assuming a priori boundedness.
\begin{thm}\label{thm:suffbddoff}
	Let $T$ be a Zygmund CZO (see \cite{HLMV}) with a kernel $K$ like above
	and the associated constant $\theta \in (0,1]$.
	Assume that $1 < p < q < \infty$ and
	$$
		\alpha := \frac{1}{p} - \frac{1}{q} \le \theta.
	$$
	Then we have
	$$
		\|[b,T]\|_{L^p \to L^q} \lesssim \|b\|_{\bmo_Z^{\alpha}}.
	$$
\end{thm}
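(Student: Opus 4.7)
By Theorem~\ref{thm:equidefoff}, the hypothesis $b \in \bmo_Z^\alpha$ forces $b$ to be constant in $x_1, x_2$ and $2\alpha$-H\"older continuous in $x_3$: we have $b(x) = \tilde b(x_3)$ with $|\tilde b(x_3) - \tilde b(y_3)| \lesssim \|b\|_{\bmo_Z^\alpha} |x_3-y_3|^{2\alpha}$. Formally, the commutator therefore admits the kernel representation
\[
[b,T]f(x) = \int_{\R^3} \bigl(\tilde b(x_3) - \tilde b(y_3)\bigr) K(x,y) f(y) \ud y,
\]
and the commutator kernel satisfies the pointwise bound
\[
\bigl|\bigl(\tilde b(x_3) - \tilde b(y_3)\bigr) K(x,y)\bigr| \lesssim \|b\|_{\bmo_Z^\alpha}\, |x_3-y_3|^{2\alpha}\,\size_Z(|x_1-y_1|, |x_2-y_2|, |x_3-y_3|).
\]
The plan is thus to (i) justify a usable form of this kernel identity, and (ii) bound the resulting positive ``Zygmund fractional'' operator from $L^p$ to $L^q$.

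For step (i), the obstacle is the one flagged in the introduction: in the Zygmund setting the full kernel $K$ represents $T$ only on test functions with suitably separated supports, so a direct pointwise identity for $[b,T]f(x)$ is not automatic. I would handle this by decomposing the test function into pieces supported on Zygmund rectangles and applying $K$ on pieces genuinely separated from $x$, using the partial kernels $K_{I^1}$ and $K_{I^{2,3}}$ to process the remaining pieces and exploiting the $x_3$-H\"older decay of $\tilde b$ to absorb the diagonal contribution. A cleaner alternative is to invoke the Zygmund CZO representation theorem of \cite{HLMV}, expressing $T$ as an average of dyadic model operators for which the pointwise commutator kernel identity is manifest at the dyadic level; since $b$ depends only on $x_3$, commutators with each model operator reduce essentially to a one-parameter H\"older-symbol paraproduct estimate in $x_3$ combined with pre-existing bounds in $(x_1, x_2)$.

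For step (ii), I need to prove the $L^p \to L^q$ boundedness of the positive operator $S$ with kernel $|x_3-y_3|^{2\alpha}\,\size_Z(|x_1-y_1|,|x_2-y_2|,|x_3-y_3|)$, where $\alpha = 1/p - 1/q$. On the Zygmund manifold $|x_3-y_3| \sim |x_1-y_1||x_2-y_2|$ the kernel scales as $|R|^{\alpha-1}$ on the associated Zygmund rectangle $R$, the correct scaling of a Zygmund fractional integral of order $\alpha$. Off the manifold, the smoothing factor $D_\theta$ provides extra decay; the hypothesis $\alpha \le \theta$ enters here, ensuring the additional power $|x_3-y_3|^{2\alpha}$ is absorbed. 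Splitting $S$ according to whether $|x_3-y_3|$ is smaller or larger than $|x_1-y_1||x_2-y_2|$, one obtains in each region a pointwise domination of $S$ by a composition of a one-dimensional fractional integral of order $2\alpha$ in $x_3$ with strong-maximal-type averages in $(x_1, x_2)$, and the desired estimate follows from Hardy--Littlewood--Sobolev in the $x_3$ variable together with Fubini and the $L^p$-bounds for the tri-parameter maximal function. Step (i) is the main difficulty; step (ii), while technical, is essentially a computation once the scaling is understood.
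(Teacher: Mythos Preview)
Your two-step outline (kernel inequality for the commutator, then bound the resulting positive operator) matches the paper's scheme, and you correctly flag step (i) as the main technical hurdle. However, your step (ii) contains a genuine gap: the proposed domination by ``a one-dimensional fractional integral of order $2\alpha$ in $x_3$ composed with strong-maximal-type averages in $(x_1,x_2)$'' cannot yield $L^p\to L^q$ with $\alpha=1/p-1/q$. The maximal function is only $L^p\to L^p$, while $I_{2\alpha}^3$ maps $L^p(\R)\to L^r(\R)$ with $1/p-1/r=2\alpha$, not $\alpha$; the exponents simply do not close. The paper instead shows the pointwise bound
\[
|x_3-y_3|^{2\alpha}\,\size_Z(|x_1-y_1|,|x_2-y_2|,|x_3-y_3|)\le \prod_{i=1}^3 |x_i-y_i|^{-(1-\alpha)},
\]
by splitting into $t_3\le t_1t_2$ and $t_3\ge t_1t_2$ and checking two elementary inequalities (the second of which is exactly where $\alpha\le\theta$ is used). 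The right-hand side is the kernel of $I_\alpha^1 I_\alpha^2 I_\alpha^3$, and each one-dimensional Riesz potential of order $\alpha$ maps $L^p\to L^q$ in its own variable; chaining these via Minkowski's integral inequality gives $L^p(\R^3)\to L^q(\R^3)$. So the correct picture is \emph{three} fractional integrals of order $\alpha$, one per coordinate, not a single fractional integral of order $2\alpha$ in $x_3$.

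For step (i), the paper does not use the representation theorem. It fixes a Lebesgue point $z$, averages $[b,T]f$ over a shrinking Zygmund rectangle $R(z,\eps)$, and splits $f=f1_{R^c(z,\eps)}+f1_{\R^3\setminus R^c(z,\eps)}$ where $R^c(z,\eps)=I(z_1,\eps)^c\times(I(z_2,\eps)\times I(z_3,\eps^2))^c$. The first piece is handled by the full kernel and gives the desired integral; the error pieces are shown to vanish as $\eps\to0$ using the $L^s$-boundedness of $T$, the H\"older decay $|b(x)-b(z)|\lesssim\eps^{4\alpha}$ on $R(z,\eps)$, and, crucially, the partial kernel $K_{I(z_1,\eps)}$ to control the remaining off-diagonal piece in $(x_2,x_3)$. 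Your sketch points in this direction but does not identify the concrete mechanism; the representation-theorem alternative you mention may be viable but would be a different (and longer) argument.
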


\begin{proof}
	We begin by proving that
	\begin{equation}\label{eq:comrep}
		|[b, T](f)(z)|\le \int_{\R^3}  |b(z)-b(y)| |K(z,y)| |f(y)|\ud y
	\end{equation}
	for a.e. $z$. We do this for dense $f$ of the form
	$$
		f= \sum_{I\in \mathcal Q}c_I 1_I=\sum_{I\in \mathcal Q}c_I 1_{I^1\times I^2 \times I^3},
	$$
	where $\mathcal Q$ is some finite collection of disjoint rectangles.
	Notice that \eqref{eq:comrep} is likely an equality, but e.g. the analogous results
	for bi-commutators in \cite{AHLMO2021} use special properties of one-parameter
	CZOs that are not available to us here. So we settle for an inequality as that
	is all that we need.

	Define
	$$
		V(x) := \int_{\R^3}  |b(x)-b(y)| |K(x,y)| |f(y)|\ud y.
	$$
	This is a well-defined converging integral -- see e.g. the main
	calculation \eqref{eq:comest} later. Now, to prove \eqref{eq:comrep}
	fix $z=(z_1, z_2, z_3)$ that is a (rectangular) Lebesgue point of $[b,T]f$ and $Vf$.
	Moreover, we may assume $z_1\notin \cup_{I\in \mathcal Q}\partial I^1$.

	We have
	\begin{align*}
		[b, T](f)(z)=\lim_{\eps \to 0}\bla [b, T](f)\bra_{R(z, \eps)},
	\end{align*}
	where $R(z, \eps)=I(z_1, \eps)\times I(z_2, \eps)\times I(z_3, \eps^2)$
	is a Zygmund rectangle shrinking to $\{z\}$; here $I(z_j, t)$
	stands for an interval centered at $z_j\in \R$ of side length $t$.
	Actually, it is not particularly important that we use a Zygmund rectangle here,
	a cube would also work (but some estimates are perhaps slightly more aesthetic like this).
	In preparation for using the full kernel representation of $T$
	we define that
	\[
		R^c(z, \eps) := I(z_1, \eps)^c \times (I(z_2, \eps)\times I(z_3, \eps^2))^c.
	\]
	We decompose
	\begin{align*}
		\bla [b, T](f)\bra_{R(z, \eps)} & = \bla [b, T](f1_{R^c(z, \eps)})\bra_{R(z, \eps)}
		+ \bla (b- b(z))T(f 1_{\R^3\setminus R^c(z, \eps)})\bra_{R(z, \eps)}                                          \\
		                                & -\bla T\big((b- b(z))f 1_{\R^3\setminus R^c(z, \eps)}\big)\bra_{R(z, \eps)} \\
		                                & =: I_{\eps}+II_{\eps}+III_{\eps}.
	\end{align*}
	Notice that for the main term $I_{\eps}$
	we get using the full kernel representation that
	\begin{align*}
		|I_{\eps}| & =\Big|\Big\langle [b, T](f1_{R^c(z, \eps)}), \frac{1_{R(z, \eps)}}{|R(z, \eps)|}\Big\rangle\Big|             \\
		           & =  \Big|\Big\langle x \mapsto \int_{R^c(z, \eps)}  (b(x)-b(y)) K(x,y) f(y)\ud y\Big\rangle_{R(z, \eps)}\Big|
		\le \langle V \rangle_{R(z, \eps)}.
	\end{align*}
	We will soon show that
	\begin{equation}\label{eq:reduction}
		\lim\limits_{\eps\to 0} |II_{\eps}|+\lim\limits_{\eps\to 0}|III_{\eps}| =0.
	\end{equation}
	This then implies the claim -- we have
	$$
		|[b,T](f)(z)| \le \lim_{\eps \to 0} \langle V \rangle_{R(z, \eps)} = V(z).
	$$

	So to prove \eqref{eq:comrep} it remains to prove \eqref{eq:reduction}.
	Clearly, for $x\in R(z, \eps)$, we have $|b(x)-b(z)|\lesssim  \|b\|_{\bmo_Z^\alpha} \eps^{4\alpha}$ and hence
	$|II_{\eps}|$ is dominated by
	\begin{align*}
		\bla\big|(b- b(z))T(f 1_{\R^3\setminus R^c(z, \eps)})\big|\bra_{R(z, \eps)}
		 & \lesssim \|b\|_{\bmo_Z^\alpha} \eps^{4\alpha} \bla\big|T(f 1_{\R^3\setminus R^c(z, \eps)})\big|\bra_{R(z, \eps)}     \\
		 & \le  \|b\|_{\bmo_Z^\alpha} \eps^{4\alpha} \bla\big|T(f 1_{\R^3\setminus R^c(z, \eps)})\big|^s\bra_{R(z, \eps)}^{1/s} \\
		 & \lesssim  \|b\|_{\bmo_Z^\alpha} \eps^{4(\alpha-\frac 1s)}  \|f\|_{L^s}.
	\end{align*}
	By taking $s$ large enough (so that $1/s<\alpha$) we get $\lim\limits_{\eps\to 0}|II_{\eps}|=0$
	as desired.

	The estimate of $|III_{\eps}|$ is more involved.
	Since the collection $\mathcal Q$ is finite, it suffices to prove \eqref{eq:reduction} for $f=1_I$
	-- this is convenient for some partial kernel estimates below.
	Notice first that
	\[
		f 1_{\R^3\setminus R^c(z, \eps)}= f 1_{\R\times I(z_2, \eps)\times I(z_3, \eps^2)}
		+ f1_{I(z_1, \eps) \times (I(z_2, \eps)\times I(z_3, \eps^2))^c}.
	\]
	The part of $III_{\eps}$ related to the first term from above is easy -- indeed, we have
	\begin{align*}
		\big|\bla T\big((b- b(z)) & f 1_{\R\times I(z_2, \eps)\times I(z_3, \eps^2)} \big)\bra_{R(z, \eps)}\big|     \\
		                          & \le \bla |T\big((b- b(z))f
		1_{\R\times I(z_2, \eps)\times I(z_3, \eps^2)} \big)|^s\bra_{R(z, \eps)}^{1/s}                               \\
		                          & \lesssim \eps^{-4/s}\| (b- b(z))f 1_{\R\times I(z_2, \eps)\times I(z_3, \eps^2)}
		\|_{L^s}\lesssim  \|b\|_{\bmo_Z^\alpha} \eps^{4(\alpha-\frac 1s)}  \|f\|_{L^s},
	\end{align*}
	and so we are reduced to proving
	\begin{equation}\label{eq:1103}
		\lim\limits_{\eps\to 0} \big|\bla T\big((b- b(z))f 1_{I(z_1, \eps) \times (I(z_2, \eps)\times I(z_3, \eps^2))^c}\big)\bra_{R(z, \eps)}\big|=0.
	\end{equation}

	If $z_1 \notin I^1$, then by our assumption $z_1 \notin \bar{I^1}$.
	So if $\eps$ is sufficiently small we have $I(z_1, \eps) \cap I^1=\emptyset$
	and so
	\[
		f1_{I(z_1, \eps) \times (I(z_2, \eps)\times I(z_3, \eps^2))^c}=0
	\]
	and the desired estimate is trivial. Now, if $z_1 \in I^1$, we have
	$I(z_1, \eps) \subset I^1$ for all sufficiently small $\eps$.
	Then we are able to use the partial kernel representation:
	\begin{align}
		 & \bla T\big((b- b(z))f 1_{I(z_1, \eps) \times (I(z_2, \eps)\times I(z_3, \eps^2))^c}
		\big)\bra_{R(z, \eps)} \label{eq:pa}                                                   \\
		 & = \eps^{-4}\bla   T\big( 1_{I(z_1, \eps)} \otimes (b- b(z))
		1_{ I^{23}\setminus(I(z_2, \eps)\times I(z_3, \eps^2))} \big),
		1_{I(z_1, \eps)} \otimes   1_{ I(z_2, \eps)\times I(z_3, \eps^2)} \bra\nonumber        \\
		 & =\eps^{-4}\int_{ I(z_2, \eps)\times I(z_3, \eps^2)}
		\int_{I^{23}\setminus (I(z_2, \eps)\times I(z_3, \eps^2))}
		(b(y_3)- b(z_3)) K_{I(z_1, \eps)}(x_{23}, y_{23})\ud y_{23}\ud x_{23},\nonumber
	\end{align}
	where we have abused the notation so that $b(x_3):=b(x_1, x_2, x_3)$
	for whatever $x_1, x_2$.

	We then split the RHS of \eqref{eq:pa} as
	\begin{align*}
		 & \eps^{-4}\int_{ I(z_2, \eps)\times I(z_3, \eps^2)}\int_{\substack{I^{23}\setminus (I(z_2, \eps)\times I(z_3, \eps^2)) \\ |y_3-z_3|\le 2\sqrt \eps}} +\eps^{-4}\int_{ I(z_2, \eps)\times I(z_3, \eps^2)}\int_{\substack{I^{23}\setminus (I(z_2, \eps)\times I(z_3, \eps^2))\\ |y_3-z_3|> 2\sqrt \eps}}\\
		 & =: III_1+III_2.
	\end{align*}
	We estimate $III_1$ first using that there
	\[
		|b(y_3)- b(z_3)|\lesssim_{\|b\|_{\bmo_Z^{\alpha}}} \eps^{\alpha}.
	\]
	Using this and the size estimate of the partial kernel,
	\begin{align*}
		|K_{I(z_1, \eps)}(x_{23}, y_{23})|
		\lesssim \frac{\eps}{ |x_2-y_2||x_3-y_3| } \Big( \frac{\eps |x_2-y_2|}{|x_3-y_3|}
		+\frac{|x_3-y_3|}{\eps |x_2-y_2|} \Big)^{-\theta},
	\end{align*}
	we can dominate $|III_1|$ by
	\begin{align*}
		 & \eps^{\alpha-4}\int_{ I(z_2, \eps)\times I(z_3, \eps^2)}
		\int_{\substack{I^{23}\setminus (I(z_2, \eps)\times I(z_3, \eps^2))
		\\ |x_2-y_2|>\eps^{-1}|x_3-y_3|}}  \frac{\eps^{1-\theta}}{|x_2-y_2|^{1+\theta}
		|x_3-y_3|^{1-\theta}}\ud y_{23}\ud x_{23}                           \\
		 & \quad+ \eps^{\alpha-4}\int_{ I(z_2, \eps)\times I(z_3, \eps^2)}
		\int_{\substack{I^{23}\setminus (I(z_2, \eps)\times I(z_3, \eps^2)) \\
				|x_2-y_2|\le\eps^{-1}|x_3-y_3|}}  \frac{\eps^{1+\theta}}
		{|x_2-y_2|^{1-\theta} |x_3-y_3|^{1+\theta}}\ud y_{23}\ud x_{23}.
	\end{align*}
	Next, we note that
	\[
		I^{23}\setminus (I(z_2, \eps)\times I(z_3, \eps^2))
		= \Big[(I^2 \setminus I(z_2, \eps)) \times I^3\Big] \cup \Big[I^2 \times (I^3\setminus I(z_3, \eps^2))\Big].
	\]
	To demonstrate the idea of how to handle the terms of $III_1$
	we estimate one part of it as follows:
	\begin{align*}
		 & \eps^{\alpha-4}\int_{ I(z_2, \eps)\times I(z_3, \eps^2)}
		\int_{\substack{ (I^2 \setminus I(z_2, \eps))\times I^3                               \\ |x_2-y_2|>\eps^{-1}|x_3-y_3|}}  \frac{\eps^{1-\theta}}{|x_2-y_2|^{1+\theta} |x_3-y_3|^{1-\theta}}\ud y_{23}\ud x_{23}\\
		 & \lesssim  \eps^{\alpha-4}\int_{ I(z_2, \eps)\times I(z_3, \eps^2)}
		\int_{I^2 \setminus I(z_2, \eps)}\frac \eps{|x_2-y_2|}\ud y_2 \ud x_{23}              \\
		 & =\eps^{\alpha-1} \int_{ I(z_2, \eps)}\int_{I^2 \setminus I(z_2, \eps)}
		\frac 1{|x_2-y_2|}\ud y_2 \ud x_{2}                                                   \\
		 & = \eps^{\alpha-1}\big\langle |H(1_{I(z_2, \eps)})|, 1_{I^2 \setminus I(z_2, \eps)}
		\big\rangle\lesssim  \eps^{\alpha-1}\eps^{1-1/s} |I^2|^{1/s},
	\end{align*}
	where $H$ is the Hilbert transform. This tends to $0$ as $\eps \to 0$ by taking $1/s<\alpha$.
	Other terms that come from $III_1$ can be handled in a similar way.

	Finally, we still have to estimate $III_2$. Assuming that $\eps<1$ we have
	\[
		|x_3-y_3|\ge |y_3-z_3|- |x_3-z_3|> 2\sqrt \eps - \eps^2> \sqrt \eps.
	\]
	Notice also that
	\[
		|x_2-y_2|\le |x_2-z_2|+|z_2-y_2|< \eps+ (\dist(z_2, I^2)+|I^2|)
		< 1 + \dist(z_2, I^2)+|I^2| := L.
	\]
	Here the philosophical reason why in $III_2$ we can always afford to estimate
	\begin{align*}
		|K_{I(z_1, \eps)}(x_{23}, y_{23})|
		 & \lesssim \frac{\eps}{ |x_2-y_2||x_3-y_3| } \Big( \frac{\eps |x_2-y_2|}{|x_3-y_3|}
		+\frac{|x_3-y_3|}{\eps |x_2-y_2|} \Big)^{-\theta}                                    \\
		 & \lesssim \frac{\eps^{1+\theta}}{|x_2-y_2|^{1-\theta}|x_3-y_3|^{1+\theta}}
	\end{align*}
	is that if $\eps$ is sufficiently small so that $\sqrt{\eps} L < 1$ we
	automatically have
	\[
		|x_3-y_3|> \sqrt \eps> \eps L \ge \eps |x_2-y_2|.
	\]
	Using $|b(y_3)-b(z_3)|\le \sup\limits_{y_3\in I^3}|b(y_3)|+|b(z_3)| \lesssim 1$, we have
	\begin{align*}
		 & |III_2|
		\lesssim \eps^{-4} \int_{ I(z_2, \eps)\times I(z_3, \eps^2)}
		\int_{\substack{|x_2-y_2|< L                                                          \\
				|x_3-y_3|>\sqrt \eps}}
		\frac{\eps^{1+\theta}}{|x_2-y_2|^{1-\theta} |x_3-y_3|^{1+\theta}}\ud y_{23}\ud x_{23} \\
		 & \lesssim \eps^{-3+\theta} L^\theta
		\int_{ I(z_2, \eps)\times I(z_3, \eps^2)}\int_{|x_3-y_3|>\sqrt \eps}
		\frac 1{|x_3-y_3|^{1+\theta}}\ud y_3 \ud x_{23}                                       \\
		 & \lesssim \eps^{-3+\theta} \eps^3 \eps^{-\theta/2} = \eps^{\theta/2},
	\end{align*}
	which tends to $0$ as $\eps \to 0$.
	This completes the proof of \eqref{eq:1103} and so
	we are done with the proof of \eqref{eq:comrep}.

	We will then estimate the RHS of \eqref{eq:comrep}. First, a few elementary inequalities.
	Let $t_i > 0$ -- we notice that
	\begin{align*}
		\frac{t_3^{2\alpha}}{t_1^{1+\theta}t_2^{1+\theta}t_3^{1-\theta}}
		= \frac{1}{\prod_i t_i^{1-\alpha}} \Big(\frac{t_3}{t_1t_2} \Big)^{\alpha+\theta}
		\le \frac{1}{\prod_i t_i^{1-\alpha}}
	\end{align*}
	if $t_3 \le t_1t_2$. A similar (but not the same) inequality in the case $t_1t_2 \le t_3$ requires
	our assumption $\alpha \le \theta$ from the statement of theorem. Indeed, we have
	\begin{align*}
		\frac{t_3^{2\alpha}}{t_1^{1-\theta}t_2^{1-\theta}t_3^{1+\theta}}
		= \frac{1}{\prod_i t_i^{1-\alpha}} \Big(\frac{t_1t_2}{t_3} \Big)^{\theta-\alpha}
		\le \frac{1}{\prod_i t_i^{1-\alpha}}.
	\end{align*}
	For convenience, assume that $\|b\|_{\bmo_Z^{\alpha}} \le 1$ (or just absorb this
	to the constant in $\lesssim$).
	Using the size estimate of $K$ and the above inequalities we then have
	\begin{equation}\label{eq:comest}
		\begin{split}
			\int_{\R^3} & |b(x)-b(y)| |K(x,y)| |f(y)|\ud y                                                         \\
			            & \lesssim \int_{\R^3} \frac{|x_3-y_3|^{2\alpha}}
			{\big(|x_1-y_1||x_2-y_2||x_3-y_3|\big)^{1-\theta}
			(|x_1-y_1|^2|x_2-y_2|^2+|x_3-y_3|^2)^\theta }|f(y)|\ud y                                               \\
			            & \le  \int_{\R^3} \frac{|x_3-y_3|^{2\alpha}
			1_{\{|x_1-y_1||x_2-y_2|\ge |x_3-y_3|\}}}
			{|x_1-y_1|^{1+\theta}|x_2-y_2|^{1+\theta}|x_3-y_3|^{1-\theta}}|f(y)|\ud y                              \\
			            & \hspace{3cm}+ \int_{\R^3} \frac{|x_3-y_3|^{2\alpha}
			1_{\{|x_1-y_1||x_2-y_2| \le |x_3-y_3|\}}}
			{|x_1-y_1|^{1-\theta}|x_2-y_2|^{1-\theta}|x_3-y_3|^{1+\theta}}|f(y)|\ud y                              \\
			            & \le  \int_{\R^3}\frac{1}{  |x_1-y_1|^{1-\alpha}|x_2-y_2|^{1-\alpha}|x_3-y_3|^{1-\alpha}}
			|f(y)|\ud y= (I_\alpha^1I_\alpha^2I_\alpha^3)(f)(x),
		\end{split}
	\end{equation}
	where $I_{\alpha}^i$ is the Riesz potential on the corresponding parameter.

	The conclusion follows from the $L^p\to L^q$ boundedness of $I_\alpha^1  I_\alpha^2  I_\alpha^3$,
	which is the following argument using the $L^p \to L^q$ boundedness of the individual
	Riesz potentials combined with Minkowski's integral inequality (and the non-negativity of the Riesz potential):
	\begin{align*}
		\| I_\alpha^1  I_\alpha^2  I_\alpha^3 (f)\|_{L^q}
		 & \lesssim \big\| \|   I_\alpha^2  I_\alpha^3 (f)\|_{L^p_{x_1}}
		\big\|_{L^q_{x_{23}}}\le \big\| \|   I_\alpha^2  I_\alpha^3 (f)\|_{L^q_{x_{23}}}\big\|_{L^p_{x_{1}}} \\
		 & \lesssim   \Big\| \big\| \| I_\alpha^3 (f)\|_{L^p_{x_{2}}}
		\big\|_{L^q_{x_3}} \Big\|_{L^p_{x_{1}}}
		\le  \big\| \|I_\alpha^3 (f)\|_{L^q_{x_{3}}}\big\|_{L^p_{x_{12}}}
		\lesssim \|f\|_{L^p}.
	\end{align*}
\end{proof}
\begin{rem}
	Notice that if $\theta\ge 1/2$, then we can remove the restriction $\alpha \le \theta$.
	Indeed, if $b \in \bmo_Z^{\alpha}$ for $\alpha > 1/2$, $b$ is a constant and the result is trivial.
\end{rem}

\section{Necessity results for compactness}
In the compactness context we assume that we are given a Zygmund SIO $T$
with a non-degenerate kernel $K$ -- this is because off-diagonal constants
do not appear as natural in the compactness context and, instead, we want to talk
about the compactness of the actual commutator $[b,T]$. But the kernel itself can
be weak as in Section \ref{sec:awf}, since we only rely on AWF.

Interestingly, it turns out that compactness implies that $b$ is constant.
An easier version of this Zygmund argument that we present in this chapter gives the same result also for $[b,T]$, where
$T$ is a bi-parameter SIO. See the Introduction for the significance of this.

We need the following in our compactness necessity proofs.
The proof is similar to that in Martikainen--Oikari \cite{Martikainen-Oikari-2024}.
\begin{lem}\label{lem:selection}
	Suppose $(R_i)$, $R_i = I_i^1 \times I_i^2 \times I_i^3$,
	is a sequence of Zygmund rectangles so that for \textbf{some} $j \in \{1,2,3\}$
	we have $\ell(I_i^j) \to 0$
	as $i \to \infty$. Then there is a subsequence
	$(R_{i(k)})$ so that either $(R_{i(k)})$ or $(\wt R_{i(k)})$ is pairwise disjoint.
	Here the reflected rectangles are defined using some underlying non-degenerate $K$.
\end{lem}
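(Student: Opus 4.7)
The plan is to reduce to the case $\ell(I_i^1) \to 0$, apply an infinite Ramsey argument to dichotomize, and in the ``bad'' alternative use a geometric-decay extraction on the reflections.

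First, I would reduce to $\ell(I_i^1) \to 0$ (the case $j=2$ is symmetric). If $j = 3$, then since $\ell(I_i^3) = \ell(I_i^1)\ell(I_i^2) \to 0$, at least one of $\ell(I_i^1), \ell(I_i^2)$ must have a subsequence tending to zero; otherwise both are bounded below by some $\delta > 0$ and then $\ell(I_i^3) \ge \delta^2$, a contradiction. Next, I would apply the infinite Ramsey theorem to the two-coloring of unordered pairs $\{i,i'\}$ sending the pair to ``disjoint'' or ``intersecting'' according to whether $R_i \cap R_{i'}$ is empty. This yields an infinite monochromatic subsequence, still denoted $(R_i)$. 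If it is disjoint, we are done. Otherwise the $R_i$ pairwise intersect, and applying the one-dimensional Helly property coordinatewise produces a common point $p \in \bigcap_i R_i$; in particular $|c_{I_i^1} - p_1| \le \ell(I_i^1)/2$ for every $i$.

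It remains to extract a further subsequence on which the reflected rectangles $\wt R_{i(k)}$ are pairwise disjoint. The relation $|c_{\wt I_i^1} - c_{I_i^1}| \sim A^{1/4}\ell(I_i^1)$ allows either sign; pigeonholing on the sign I may assume $c_{\wt I_i^1} - c_{I_i^1} \in [c_1 A^{1/4}\ell(I_i^1),\, c_2 A^{1/4}\ell(I_i^1)]$ for fixed constants $0 < c_1 < c_2$. Combining this with $|c_{I_i^1} - p_1| \le \ell(I_i^1)/2$ and taking $A$ large, one obtains new constants $0 < c_1' < c_2'$ such that
$$
\wt I_i^1 \subset \big[p_1 + c_1' A^{1/4}\ell(I_i^1),\ p_1 + c_2' A^{1/4}\ell(I_i^1)\big].
$$
Since $\ell(I_i^1) \to 0$, I would thin to a subsequence satisfying $\ell(I_{i(k+1)}^1) \le (c_1'/(2c_2'))\,\ell(I_{i(k)}^1)$; across different levels $k$ the first-coordinate projections $\wt I_{i(k)}^1$ are then pairwise disjoint, and hence so are the $\wt R_{i(k)}$.

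The main obstacle is the bookkeeping in the final step: one must verify that the reflection separation $A^{1/4}\ell(I_i^1)$ truly dominates both the interval length $\ell(I_i^1)$ and the offset $|c_{I_i^1} - p_1|$, which is exactly where fixing $A$ large enough is used. The Ramsey/Helly reduction is standard, but essential to localize all the $R_i$ near a single point in the first coordinate, which is what powers the subsequent scaling argument for the reflections.
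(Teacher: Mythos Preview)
Your argument is correct, but it takes a genuinely different route from the paper's proof. The paper never invokes Ramsey or Helly. Instead, it first disposes of the case $\dist(I_i^j,0)\to\infty$ by a direct greedy selection, and in the remaining bounded case it runs a nested dyadic construction to locate a single accumulation point $x$ with $I_i^j\cap Q_i\ne\emptyset$ for a shrinking sequence $Q_i\ni x$. It then dichotomizes on whether $x\notin I_i^j$ for infinitely many $i$: if so, a greedy argument (using $\dist(x,I_i^j)+\ell(I_i^j)\to 0$ together with $x\notin I_i^j$) extracts a disjoint subsequence of the $I_i^j$; if not, then $x\in I_i^j$ forces $x\notin\wt I_i^j$ (since $I^j$ and $\wt I^j$ are separated), and the same greedy extraction is applied to the $\wt I_i^j$ instead. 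No quantitative use of the separation scale $A^{1/4}$ is made---only the qualitative disjointness of $I^j$ and $\wt I^j$.

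Your approach trades that hands-on construction for two black boxes (infinite Ramsey and the one-dimensional Helly property), after which the geometric-decay extraction on the reflected intervals is clean and short. Two minor remarks: the reduction from $j=3$ to $j\in\{1,2\}$ via the Zygmund constraint is harmless but unnecessary, since the same scaling argument works directly for $j=3$ with $A^{1/2}$ in place of $A^{1/4}$ (the paper in fact notes that the Zygmund structure plays no role in this lemma); and in the Helly step you should take closures of the intervals to guarantee a common point for the infinite family, which is exactly what your conclusion $|c_{I_i^1}-p_1|\le \ell(I_i^1)/2$ asserts.
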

\begin{proof}
	The fact that these are Zygmund rectangles does not really play a role in this proof -- other than
	for the fact that these are the rectangles for which we have defined reflection (using some
	non-degenerate kernel $K$ that does not otherwise appear in the proof).

	Notice first that if $\dist(I_i^j, 0) \to \infty$,
	then it is obvious how to choose a subsequence so that $(I^j_{i(k)})$
	is pairwise disjoint. Indeed, suppose that we have picked $i(1) < \cdots < i(N)$
	so that $\{I^j_{i(k)}\}_{k=1}^N$ is pairwise disjoint.
	Let $M>0$ be such that $\cup_{k=1}^N I^j_{i(k)}\subset [-M, M]$.
	Then here exists $i(N+1) > i(N)$ so that
	$\dist(I^j_{i(N+1)},0) > M$. Now, obviously
	$\{I^j_{i(k)}\}_{k=1}^{N+1}$ is pairwise disjoint, and we can continue by induction.

	Thus, during the proof of the theorem we may assume
	that $\dist(I_i^j, 0) \to \infty$ does not hold.
	Passing to a subsequence we may then assume $\dist(I_i^j, 0) \lesssim 1$
	for all $i$.

	So assume now that $\ell(I_i^j) \to 0$. With the above reduction in place this
	allows us to choose a closed dyadic interval $Q_0$ centered at $0$ so that $\bigcup_i I_i^j \subset Q_0$.
	There must exist $Q_ 1\in \ch(Q_0)$ so that
	$F(Q_1) :=	\{ i \colon I_i^j \cap Q_1\not=\emptyset \}$
	has infinitely many elements. Similarly, there must exist $Q_2 \in \ch(Q_1)$
	so that $F(Q_2) = \{ i \colon I_i^j \cap Q_2\not=\emptyset \} \subset F(Q_1)$
	has infinitely many elements. Continue like this to obtain closed
	intervals $Q_0 \supsetneq Q_1 \supsetneq Q_2 \supsetneq \cdots$ with $\ell(Q_k) \to 0$,
	and the corresponding infinite index sets $F(Q_k) \supset F(Q_{k+1})$.
	Pick $i(1) \in F(Q_1)$. Then, using that $F(Q_2)$ is infinite, pick
	$i(2) \in F(Q_2)$ with $i(2) > i(1)$. Continue like this to obtain
	indices $i(1) < i(2) < \cdots$ so that $i(k) \in F(Q_k)$, i.e.,
	$I_{i(k)}^j \cap Q_k \ne \emptyset$. Now, re-index so that
	$I_i^j \cap Q_i \ne \emptyset$ for all $i$.

	Let $x$ be such that
	$$
		\{x\} = \bigcap_i Q_i.
	$$
	Notice that the intersection is non-empty as each $Q_i \supset Q_{i+1}$ is compact, and then a
	singleton as $\ell(Q_i) \to 0$. Let
	$$
		\calF := \{i \colon x \not \in I_i^j\}.
	$$
	We assume first that $\#\calF = \infty$. Notice that $I_i^j \cap Q_i \ne \emptyset$
	and $x \in Q_i$ implies that
	$$
		\dist(x, I_i^j) \le \ell(Q_i) \to 0.
	$$
	As also $\ell(I^j_i) \to 0$ we have
	$$
		\dist(x, I_i^j) + \ell(I_i^j) \to 0.
	$$
	Pick $i(1) \in \calF$. Suppose $i(1) < i(2) < \cdots < i(N)$, $i(k) \in \calF$, have
	already been chosen so that the intervals $I^j_{i(1)}, \ldots, I^j_{i(N)}$ are pairwise disjoint.
	Then pick -- using that $\calF$ is infinite -- $i(N+1) \in \calF$, $i(N+1) > i(N)$, so that
	$$
		\dist(I_{i(N+1)}^j, x) + \ell(I_{i(N+1)}^j) < \dist\Big(x, \bigcup_{k=1}^N I_{i(k)}^j\Big)/10,
	$$
	where it is important to notice that the RHS is strictly positive as $x$
	does not belong to any of these finitely many intervals.
	It is now clear that $(I_{i(k)}^j)_{k=1}^{N+1}$ is a disjoint collection of cubes.
	Induction finishes the case $\#\calF = \infty$.

	Suppose then $\#\calF < \infty$. Since
	$$
		\N \setminus \calF = \{i \colon x \in I_i^j \} \subset \{i \colon x \not \in \wt I_i^j\} =: \wt \calF,
	$$
	we must have that $\# \wt \calF = \infty$. Still $\ell(\wt I_i^j) = \ell(I_i^j) \to 0$.
	To see that also $\dist(x, \wt I_i^j) \to 0$ choose $y \in \wt I_i^j$ so that $\dist(x, \wt I_i^j)
		= |y-x|$ and $z \in I_i^j$ so that $\dist(x, I_i^j) = |x-z|$ and estimate
	$$
		\dist(x, \wt I_i^j) = |y-x| \le |y-z| + \dist(x, I_i^j)
		\lesssim \ell(I_i^j) + \ell(Q_i) \to 0.
	$$
	After this observation we can repeat the proof from the case $\# \calF = \infty$
	to produce a subsequence so that $(\wt I_{i(k)}^j)$ is pairwise disjoint.
\end{proof}

We derive the following using AWF and the previous lemma.
\begin{thm}
	Let $T$ be a non-degenerate Zygmund invariant SIO,
	$b \in L^1_{\loc}$, $1 < p \le q < \infty$ and $\alpha := 1/p-1/q \ge 0$.
	Assume that the commutator
	$[b, T] \colon L^p \to L^q$ is compact.
	Then there cannot exist $c > 0$ and a sequence of Zygmund rectangles $(R_i)$,
	$R_i = I_i^1 \times I_i^2 \times I_i^3$, so that
	$\calO_{\alpha}(b, R_i) \ge c$ for all $i$ and that for some $j \in \{1,2,3\}$ we have
	$\ell(I_i^j) \to 0$.
\end{thm}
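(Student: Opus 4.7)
The plan is to argue by contradiction, converting the oscillation lower bound into a pairing against $[b,T]$ via AWF, then using Lemma~\ref{lem:selection} to produce a disjointness structure that makes this pairing incompatible with compactness. Suppose such a sequence $(R_i)$ exists. Apply Lemma~\ref{lem:AWFCorBdd} to each $R_i$ to obtain functions $\varphi_{R_i,k}$ and $\psi_{\wt R_i,k}$, $k=1,2$, with $|\varphi_{R_i,k}|\lesssim 1_{R_i}$, $|\psi_{\wt R_i,k}|\lesssim |R_i|^{-1}1_{\wt R_i}$, and
$$
\osc(b,R_i)\lesssim \sum_{k=1}^{2}|\langle [b,T]\varphi_{R_i,k},\psi_{\wt R_i,k}\rangle|.
$$
Next, normalize by setting $\tilde\varphi_{i,k}:=|R_i|^{-1/p}\varphi_{R_i,k}$ and $\tilde\psi_{i,k}:=|R_i|^{1/q}\psi_{\wt R_i,k}$, so that $\|\tilde\varphi_{i,k}\|_{L^p}\lesssim 1$ and $\|\tilde\psi_{i,k}\|_{L^{q'}}\lesssim 1$. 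Since $|R_i|^{1/p}|R_i|^{-1/q}=|R_i|^{\alpha}$, a routine bookkeeping of exponents yields
$$
\calO_\alpha(b,R_i)\lesssim \sum_{k=1}^{2}|\langle [b,T]\tilde\varphi_{i,k},\tilde\psi_{i,k}\rangle|.
$$

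Since $\ell(I_i^j)\to 0$ for some $j$, Lemma~\ref{lem:selection} lets me pass to a subsequence so that either $(R_i)$ or $(\wt R_i)$ is pairwise disjoint. Consider first the case when the $R_i$ are pairwise disjoint. For any $g\in L^{p'}$ the disjointness gives
$$
\sum_{i}\|g\mathbf{1}_{R_i}\|_{L^{p'}}^{p'}\le \|g\|_{L^{p'}}^{p'},
$$
so $|\langle \tilde\varphi_{i,k},g\rangle|\le \|g\mathbf{1}_{R_i}\|_{L^{p'}}\to 0$. Hence $\tilde\varphi_{i,k}\rightharpoonup 0$ in $L^p$ (using $1<p<\infty$). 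Compactness of $[b,T]\colon L^p\to L^q$ means it maps weakly null sequences to norm null sequences, so $\|[b,T]\tilde\varphi_{i,k}\|_{L^q}\to 0$, and therefore $|\langle [b,T]\tilde\varphi_{i,k},\tilde\psi_{i,k}\rangle|\le \|[b,T]\tilde\varphi_{i,k}\|_{L^q}\|\tilde\psi_{i,k}\|_{L^{q'}}\to 0$. This contradicts $\calO_\alpha(b,R_i)\ge c$.

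In the remaining case the $\wt R_i$ are pairwise disjoint, and I run the dual version: Schauder's theorem gives that $[b,T]^*\colon L^{q'}\to L^{p'}$ is compact, the disjointness of the $\wt R_i$ together with $1<q'<\infty$ gives $\tilde\psi_{i,k}\rightharpoonup 0$ in $L^{q'}$, and then
$$
|\langle [b,T]\tilde\varphi_{i,k},\tilde\psi_{i,k}\rangle|=|\langle \tilde\varphi_{i,k},[b,T]^{*}\tilde\psi_{i,k}\rangle|\le \|[b,T]^{*}\tilde\psi_{i,k}\|_{L^{p'}}\to 0
$$
yields the same contradiction. The only moderately delicate step is checking that the two outputs of the selection lemma match exactly the two duality-symmetric scenarios needed here; the rest is AWF plus the standard fact that compact operators send weakly convergent sequences to norm convergent ones.
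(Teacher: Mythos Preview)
Your proof is correct and follows the same AWF-plus-selection skeleton as the paper, but the endgame is genuinely different. After normalizing and passing to a disjoint subsequence, the paper extracts (by compactness on the bounded sequence $\wt\varphi_{i,k}$) an $L^q$-limit $\Phi_k$ of $[b,T]\wt\varphi_{i,k}$, observes from the lower bound $c\le\calO_\alpha(b,R_i)$ that some $\Phi_k$ is nonzero, and then invokes the Uchiyama-type Lemma~\ref{lem:seq} to derive a contradiction. You instead show directly that $\wt\varphi_{i,k}\rightharpoonup 0$ in $L^p$ from the disjoint supports and then use complete continuity of compact operators to get $\|[b,T]\wt\varphi_{i,k}\|_{L^q}\to 0$. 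Your route is shorter and avoids Lemma~\ref{lem:seq} entirely; the paper's route, on the other hand, packages the contradiction into a reusable lemma that is standard in this compactness literature. Both are valid; your argument only needs the elementary fact that a compact operator between reflexive spaces sends weakly null sequences to norm null sequences, which you justify correctly via the tail estimate $\sum_i\|g\mathbf{1}_{R_i}\|_{L^{p'}}^{p'}\le\|g\|_{L^{p'}}^{p'}$.
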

\begin{proof}
	Suppose, aiming for a contradiction, that such a sequence of rectangles exists.
	Using the above lemma and passing to a subsequence we can assume that, in addition,
	$(R_i)$ or $(\wt R_i)$ is pairwise disjoint.
	By AWF, Lemma \ref{lem:AWFCorBdd}, we find functions
	$$
		\varphi_{i,1}:=\varphi_{R_i, 1}, \, \varphi_{i,2}:=\varphi_{R_i,2}, \,
		\psi_{i,1}:=\psi_{\wt R_i, 1}, \, \psi_{i,2}:=\psi_{\wt R_i, 2}
	$$
	so that for $j = 1,2$ we have
	\[
		|\varphi_{i,j}|\lesssim 1_{R_i}, \qquad |\psi_{i,j}|\lesssim |R_i|^{-1}1_{\wt R_i}
	\]
	and
	\begin{align*}
		\osc(b, R_i) \lesssim \sum_{j=1}^2 |\langle [b,T]\varphi_{i,j}, \psi_{i,j}\rangle|
		 & = |R_i|^{\alpha} \sum_{j=1}^2 |\langle [b,T](|R_i|^{-1/p}\varphi_{i,j}), |R_i|^{1/q}\psi_{i,j}\rangle| \\
		 & =: |R_i|^{\alpha} \sum_{j=1}^2 |\langle [b,T]\wt \varphi_{i,j}, \wt \psi_{i,j}\rangle|.
	\end{align*}
	Notice that with these normalizations we have
	$$
		\|\wt \varphi_{i,j}\|_{L^p} \lesssim 1 \qquad \textup{and} \qquad
		\|\wt \psi_{i,j}\|_{L^{q'}} \lesssim 1.
	$$

	Suppose now, for instance, that $(R_i)$ is pairwise disjoint.
	Using the compactness of $[b, T]$ we can, after passing to a subsequence,
	assume that $ [b,T]\wt \varphi_{i,j} \to \Phi_j$ in $L^q$ for
	some $\Phi_j\in L^q$ simultaneously for $j=1,2$.
	Then we have
	\begin{align*}
		c\le \calO_{\alpha}(b, R_i)
		 & \lesssim \sum_{j=1}^2 |\langle [b,T]\wt \varphi_{i,j}, \wt \psi_{i,j}\rangle|                       \\
		 & \lesssim \sum_{j=1}^2 \big\| [b,T]\wt \varphi_{i,j}\big\|_{L^q} \to \sum_{j=1}^2  \|\Phi_j\|_{L^q}.
	\end{align*}
	This means that either $\Phi_1$ or $\Phi_2$ must be non-trivial.
	But this contradicts Lemma \ref{lem:seq} below. The case that  $(\wt R_i)$  is pairwise disjoint is similar
	-- just apply the previous proof to
	the compact operator $[b, T]^* \colon L^{q'} \to L^{p'}$. We are done.
\end{proof}

The following is a well-known Uchiyama type lemma that we used above
to reach the contradiction.
\begin{lem}\label{lem:seq}
	Let $U\colon L^{p}\to L^{q}$ be a bounded linear operator for some fixed $1<p,q<\infty.$
	Then, there does \textbf{not} exist a
	sequence $\{u_k\}_{k=1}^\infty\subset L^{p}$ with the following properties:
	\begin{enumerate}[$(i)$]
		\item $\|u_k\|_{L^{p}}\lesssim 1$ uniformly on $k$;
		\item
		      \begin{align*}
			      \supp u_k \cap \supp u_m = \emptyset,\qquad k\not=m;
		      \end{align*}
		\item
		      $$
			      Uu_k \to \Phi
		      $$
		      in $L^{q}$ for $\Phi$ with $\|\Phi\|_{L^q} \ne 0$.
	\end{enumerate}
\end{lem}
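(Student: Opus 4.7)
The plan is to argue by contradiction, using that sequences with pairwise disjoint supports and a uniform $L^p$ bound converge weakly to zero, combined with weak-to-weak continuity of the bounded operator $U$.

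First, I would show the weak convergence $u_k \rightharpoonup 0$ in $L^p$. Fix any $g \in L^{p'}$. By H\"older's inequality applied on $\supp u_k$,
\[
	|\langle u_k, g\rangle| \le \|u_k\|_{L^p} \|g\cdot 1_{\supp u_k}\|_{L^{p'}} \lesssim \|g\cdot 1_{\supp u_k}\|_{L^{p'}}.
\]
The pairwise disjointness of the supports gives
\[
	\sum_{k=1}^{\infty} \int_{\supp u_k} |g|^{p'} \le \int_{\R^3} |g|^{p'} < \infty,
\]
so $\|g\cdot 1_{\supp u_k}\|_{L^{p'}} \to 0$ as $k \to \infty$. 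Since $g \in L^{p'}$ was arbitrary and $(L^p)^* = L^{p'}$ for $1 < p < \infty$, we conclude $u_k \rightharpoonup 0$ weakly in $L^p$.

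Next, since $U \colon L^p \to L^q$ is bounded and linear, it is weak-to-weak continuous, so $Uu_k \rightharpoonup 0$ weakly in $L^q$. On the other hand, strong convergence $Uu_k \to \Phi$ in $L^q$ implies $Uu_k \rightharpoonup \Phi$ weakly. By uniqueness of weak limits in $L^q$, we must have $\Phi = 0$, which contradicts $\|\Phi\|_{L^q} \ne 0$.

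There is no substantial obstacle here; this is the standard Uchiyama-type disjointness argument, and the only genuine ingredient is that both $p$ and $q$ lie strictly in $(1,\infty)$ so that the relevant weak topologies behave as expected (reflexivity and identification of duals). If desired, the weak-to-weak continuity can be made explicit by writing $\langle Uu_k, h\rangle = \langle u_k, U^*h\rangle$ for $h \in L^{q'}$ and applying the first step to the function $U^*h \in L^{p'}$.
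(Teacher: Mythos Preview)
Your proof is correct, but it takes a genuinely different route from the paper's. The paper argues directly with sequence spaces: after passing to a subsequence so that $\|Uu_k-\Phi\|_{L^q}\le b_k$ with $(b_k)\in\ell^{p'}$, it picks $(a_k)\in\ell^p\setminus\ell^1$ with $a_k\ge 0$ and estimates
\[
\sum_{k=1}^N a_k\,\|\Phi\|_{L^q}\le \sum_{k=1}^N a_k\|Uu_k-\Phi\|_{L^q}+\Big\|U\Big(\sum_{k=1}^N a_k u_k\Big)\Big\|_{L^q}
\lesssim \sum_k a_k b_k + \Big(\sum_k a_k^p\Big)^{1/p},
\]
the last step using the disjoint supports to compute $\|\sum a_k u_k\|_{L^p}$ exactly. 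This forces $(a_k)\in\ell^1$, a contradiction. Your argument instead distills the disjoint-support hypothesis into the single fact $u_k\rightharpoonup 0$ and then transfers this through $U$ via $\langle Uu_k,h\rangle=\langle u_k,U^*h\rangle$. Your version is more conceptual and perhaps shorter; the paper's is entirely elementary and avoids any appeal to weak topologies or adjoints. Both rely on $1<p<\infty$ in the expected places (the paper for $\ell^p\setminus\ell^1\ne\emptyset$ and the disjoint-support norm identity, you for the duality $(L^p)^*=L^{p'}$).
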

\begin{proof}
	This is well-known, see \cites{HOS2023, Uch1978}, but so short that we give the proof for completeness.
	Assume such $(u_k)$ exists. Without loss of generality
	$$
		\|Uu_k - \Phi\|_{L^q} \le b_k
	$$
	for all $k$, where $(b_k) \in \ell^{p'}$. Choose then $a_k \ge 0$ so that $(a_k) \in \ell^p \setminus
		\ell^1$. Notice that
	$$
		\sum_{k=1}^N a_k \sim \sum_{k=1}^N a_k\|\Phi\|_{L^q}
		\le \Big\| \sum_{k=1}^N a_k Uu_k - \sum_{k=1}^N a_k\Phi\Big\|_{L^q}
		+ \Big\| U\Big(\sum_{k=1}^N a_k u_k)\Big\|_{L^q} =: I + II.
	$$
	Here
	$$
		I \le \sum_{k=1}^N a_k \|Uu_k - \Phi\|_{L^q} \le \sum_{k=1}^{\infty} a_k b_k < \infty
	$$
	and
	$$
		II \lesssim \Big\| \sum_{k=1}^N a_k u_k \Big\|_{L^p} = \Big( \sum_{k=1}^N |a_k|^p \|u_k\|_{L^p}^p \Big)^{1/p}
		\lesssim \Big( \sum_{k=1}^{\infty} |a_k|^p \Big)^{1/p} < \infty,
	$$
	so $(a_k) \in \ell^1$ after all -- a contradiction.
\end{proof}

We can now derive the main corollary -- compactness actually forces $b$ to be a constant
both in the diagonal $L^p \to L^p$ and in the off-diagonal $L^p \to L^q$, $p < q$.
We do the diagonal case first.
\begin{thm}\label{thm:comdiag}
	Let $T$ be a non-degenerate Zygmund invariant SIO,
	$b \in L^1_{\loc}$ and $1 < p < \infty$. Assume that the commutator
	$[b, T] \colon L^p \to L^p$ is compact. Then $b$ is a constant.
\end{thm}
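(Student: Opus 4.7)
The plan is to bootstrap the preceding theorem (applied with $\alpha=0$) into the uniform statement that \emph{for each $c>0$ and $j\in\{1,2\}$ there exists $\delta_j(c)>0$ such that every Zygmund rectangle $R$ with $\ell(I^j)\le\delta_j(c)$ satisfies $\osc(b,R)<c$}---this is the contrapositive of the preceding theorem, and the $j=3$ case is automatic since $\ell(I^3)=\ell(I^1)\ell(I^2)$ forces $\min(\ell(I^1),\ell(I^2))\le\sqrt{\ell(I^3)}$. I would then apply this in two stages: first to reduce $b$ to a function of $y_3$ alone, and then to force the residual function to be constant.

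For the first stage, fix a Lebesgue point $p=(p_1,p_2,p_3)$ and an interval $I^2\subset\R$ centered at $p_2$. Choose $\eps_k\searrow 0$ with $\eps_k\le\delta_1(2^{-k})$, and let $I_k$ be the Zygmund rectangle centered at $p$ with sides $\eps_k,|I^2|,\eps_k|I^2|$ (its second interval being exactly $I^2$), so that $\osc(b,I_k)<2^{-k}$. Introduce the partial average
\[
H(y_1,y_3):=\frac{1}{|I^2|}\int_{I^2}b(y_1,t,y_3)\,\ud t\in L^1_{\loc}(\R^2),
\]
which satisfies $\langle b\rangle_{I_k}=\langle H\rangle_{I_k^1\times I_k^3}$; setting $\tilde b:=b-H$ (so $\tilde b$ has zero mean in $y_2$ over $I^2$), a short triangle-inequality calculation gives
\[
\frac{1}{|I_k|}\int_{I_k}|\tilde b|\le\osc(b,I_k)+\osc(H,I_k^1\times I_k^3).
\]
Since the rectangles $I_k^1\times I_k^3\subset\R^2$ have fixed aspect ratio $|I^2|$, ordinary two-dimensional Lebesgue differentiation at $(p_1,p_3)$ drives the right side to zero. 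Applied to $F(y_1,y_3):=|I^2|^{-1}\int_{I^2}|b(y_1,t,y_3)-H(y_1,y_3)|\,\ud t$, the same 2D differentiation identifies the left side with $F(p_1,p_3)$ in the limit, whence $F(p_1,p_3)=0$ at almost every $(p_1,p_3)$; that is, $b(p_1,\cdot,p_3)$ is a.e.\ constant on $I^2$. Letting $I^2$ range over a countable exhaustion of $\R$ and running the symmetric argument with $j=2$ shows $b$ is a.e.\ independent of $y_1$ and $y_2$, i.e., $b(y)=g(y_3)$ a.e.\ for some $g\in L^1_{\loc}(\R)$.

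For the second stage, given any interval $I^3\subset\R$ and $c>0$, form the Zygmund rectangle $R$ with $\ell(I^1)=\delta_1(c)$, $\ell(I^2)=|I^3|/\delta_1(c)$ and third interval $I^3$; then $\osc(b,R)=\osc(g,I^3)<c$. Since $c$ and $I^3$ are arbitrary, $g$ has zero oscillation on every interval of $\R$ and is therefore a.e.\ constant. The main difficulty is the first stage: the rectangles $I_k$ are strongly anisotropic---two sides shrink while the $y_2$-side is fixed---so Lebesgue differentiation cannot be applied to $b$ on $I_k$ directly. The trick is to absorb the $y_2$-direction into the auxiliary $H(y_1,y_3)$, which reduces everything to ordinary two-dimensional Lebesgue differentiation on the bounded-aspect-ratio rectangles $I_k^1\times I_k^3$, for which $L^1_{\loc}$ regularity is enough.
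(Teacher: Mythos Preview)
Your proof is correct and takes a genuinely different route from the paper's.

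Both arguments start from the same consequence of the preceding theorem (small side length forces small oscillation), and both exploit the key structural fact that when one side of a Zygmund rectangle is frozen the remaining two sides have \emph{bounded eccentricity}, so ordinary $L^1_{\loc}$ Lebesgue differentiation applies. From there the two proofs diverge. You proceed in two clean stages: first, by introducing the partial average $H(y_1,y_3)$ you transfer the problem to 2D differentiation on the bounded-aspect-ratio rectangles $I_k^1\times I_k^3$ and conclude that $b$ is a.e.\ a function $g(y_3)$ of the third variable alone; then you observe that any interval $I^3$ arises as the third side of a Zygmund rectangle with arbitrarily thin first side, forcing $\osc(g,I^3)=0$. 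The paper instead never explicitly reduces to a one-variable function: it shows that the one-dimensional oscillations $\frac{1}{|I^1|}\int_{I^1}|b-\langle b\rangle_{I^1}|$ and $\frac{1}{|I^2|}\int_{I^2}|b-\langle b\rangle_{I^2}|$ vanish a.e., and then runs a sliding iteration $R_j=2^{-j}I^1\times 2^jI^2\times I^3$ (still Zygmund, with $\ell(I^1_j)\to 0$) to prove the monotonicity $\frac{1}{|R_j|}\int_{R_j}|b-c|\le\frac{1}{|R_{j+1}|}\int_{R_{j+1}}|b-c|$ and conclude $\osc(b,R)=0$ directly for every Zygmund $R$.

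Your approach is more modular and arguably more transparent---the intermediate statement ``$b=g(y_3)$'' is a natural waypoint, and your second stage is essentially a one-line observation. The paper's sliding iteration is slicker in that it bypasses the identification of $b$ with a one-variable function and the associated Fubini bookkeeping (combining ``independent of $y_2$ for a.e.\ $(y_1,y_3)$'' with ``independent of $y_1$ for a.e.\ $(y_2,y_3)$''), going straight to $\osc(b,R)=0$; it also makes the role of the Zygmund constraint $\ell(I^3)=\ell(I^1)\ell(I^2)$ very explicit through the dilation $I^1\mapsto 2^{-j}I^1$, $I^2\mapsto 2^jI^2$.
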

\begin{proof}
	Recall that we proved that if $\eps > 0$ then there exists $t > 0$
	so that for all Zygmund rectangles $R = I^1 \times I^2 \times I^3$
	satisfying that $\ell(I^j) < t$ for some $j = 1,2,3$,
	we have $\osc(b, R_i) < \eps$. This is part of what one could possibly
	call some kind of $\VMO$ space -- but it is not worth making such a definition as it
	turns out this actually already forces $b$ to be a constant.

	Indeed, for an interval $I^1 \subset \R$
	define $b_{I^1}:= |I^1|^{-1}\int_{I^1} |b- \langle b\rangle_{I^1}|\ud x_1$ -- a function of $x_{23}$.
	Then, for any $I^{23} = I^2 \times I^3$ we have
	\begin{align*}
		\langle b_{I^1}\rangle_{I^{23}} & =\frac1{|R|} \int_{R} |b- \langle b\rangle_{I^1}|\ud x   \\
		                                & \le  \frac1{|R|} \int_{R} |b- \langle b\rangle_{R}|\ud x
		+\frac1{|I^{23}|} \int_{I^{23}} |\langle b\rangle_{I^1}- \langle b\rangle_{R}|\ud x \le 2\osc(b, R).
	\end{align*}
	With a fixed $I^1$ and $x_{23}$ we consider rectangles $I^{23}$ such that
	$x_{23}\in I^{23}$ and $|I^3|=|I^1||I^2|$.
	By the rectangular Lebesgue differentiation theorem, there exists a Lebesgue measure zero set $N_{I^1} \subset \R^2$
	such that for all $x_{23} \notin N_{I^1}$ we have
	\[
		b_{I^1}(x_{23})=\lim_{|I^2|\to 0}\langle b_{I^1}\rangle_{I^{23}}\le 2 \lim_{|I^2|\to 0}\osc(b, R)=0.
	\]

	We have to dispose of the $I^1$ dependence in the exceptional set.
	To this end, let $\mathcal F_1$ be the collection of intervals in $\R$
	with rational centers and rational side lengths.
	We define
	\[
		N_{23}:= \bigcup_{I^1\in \mathcal F_1}N_{I^1}.
	\]
	Clearly $|N_{23}|=0$.
	Then for every $I^1 \in \calF_1$ we have
	\[
		b_{I^1}(x_{23})= \frac 1{|I^1|}\int_{I^1} |b(\cdot, x_{23})- \langle b(\cdot, x_{23})\rangle_{I^1}|\ud x_1=0,\quad x_{23}\notin N_{23}.
	\]
	Next, we have to dispose of the limitation $I^1 \in \calF_1$. So let $J^1 \subset \R$
	be an arbitrary interval. Choose $I^1\in \calF_1$ with $J^1\subset I^1\subset 2J^1$ and estimate,
	in the complement of the set $N_{23}$, as follows
	\begin{align*}
		\frac 1{|J^1|}\int_{J^1} |b- \langle b\rangle_{J^1}|\ud x_1 &
		\le\frac 1{|J^1|}\int_{J^1} |b- \langle b\rangle_{I^1}|\ud x_1
		+ | \langle b\rangle_{J^1}-\langle b\rangle_{I^1}|                                                                               \\
		                                                            & \le \frac 2{|J^1|}\int_{J^1} |b- \langle b\rangle_{I^1}|\ud x_1    \\
		                                                            & \le \frac 4{|I^1|}\int_{I^1} |b- \langle b\rangle_{I^1}|\ud x_1=0.
	\end{align*}
	So we have shown that for all intervals $I^1 \subset \R$ we have
	\begin{equation}\label{eq:vanish1}
		\frac 1{|I^1|}\int_{I^1} |b- \langle b\rangle_{I^1}|\ud x_1 = 0
	\end{equation}
	almost everywhere.
	By symmetry, we have for all intervals $I^2 \subset \R$ that
	\begin{equation}\label{eq:vanish2}
		\frac 1{|I^2|}\int_{I^2} |b- \langle b\rangle_{I^2}|\ud x_2=0
	\end{equation}
	almost everywhere.

	Finally, we now consider a Zygmund rectangle $R=I^1\times I^2 \times I^3$
	and will show that $\osc(b,R) = 0$ ending the proof.
	We define $I_j^1=2^{-j}I^1$, where $cI^1$ means the interval with the same
	center as $I^1$ and of side length $c\ell(I^1)$. Likewise, let $I_j^2=2^j I^2$.
	We define $R_j:=I_j^1\times I_j^2\times I^3$. As $R_j$ is Zygmund
	with $\ell(I_j^1) \to 0$ as $j \to \infty$, we have
	\begin{equation}\label{eq:vanish3}
		\lim_{j\to \infty}\frac 1{|R_j|} \int_{R_j} |b-\langle b\rangle_{R_j}|=0.
	\end{equation}
	Now, notice the following -- for any constant $c$ and for an arbitrary $j$ we have
	\begin{align*}
		\frac 1{|R_j|}\int_{R_j}|b-c|
		 & \le \frac 1{|R_j|}\int_{R_j}|b-\langle b\rangle_{I_{j+1}^1}|
		+ \frac 1{|R_j|}\int_{R_j}|\langle b\rangle_{I_{j+1}^1}- \langle b\rangle_{I_{j+1}^{1}\times I_{j+1}^{2}}| \\
		 & \qquad+ \frac 1{|R_j|}\int_{R_j}|
		\langle b\rangle_{I_{j+1}^{1}\times I_{j+1}^{2}}-c|
		=:A_1+A_2+A_3.
	\end{align*}
	We see, using \eqref{eq:vanish1}, that
	\begin{align*}
		A_1 & \le \frac 1{|R_j|}\int_{R_j}|b-\langle b\rangle_{I_{j}^1}|
		+\frac 1{|R_j|}\int_{R_j}|\langle b\rangle_{I_{j+1}^1}-\langle b\rangle_{I_{j}^1}|            \\
		    & =0+ \frac 1{|R_j|}\int_{R_j}|\langle (b-\langle b\rangle_{I_{j}^1})\rangle_{I_{j+1}^1}|
		\lesssim  \frac {1}{|R_j|}\int_{R_j}|b-\langle b\rangle_{I_{j}^1}|=0.
	\end{align*}
	Next, using \eqref{eq:vanish2}, we also have that
	\begin{align*}
		A_2\le \frac 1{|R_j|}\int_{R_j} \frac 1{|I_{j+1}^1|}\int_{I_{j+1}^1}|b- \langle b\rangle_{I_{j+1}^2}|
		\lesssim \frac {1}{|R_{j+1}|}\int_{R_{j+1}} |b- \langle b\rangle_{I_{j+1}^2}|=0.
	\end{align*}
	For $A_3$, there holds that
	\begin{align*}
		A_3= \frac 1{|I^3|}\int_{I^3}|  \langle b\rangle_{I_{j+1}^{1}\times I_{j+1}^{2}}-c|
		\le \frac 1{|R_{j+1}|}\int_{R_{j+1}} |b-c|.
	\end{align*}
	Hence, we have proved that for all $c, j$ we have
	\[
		\frac 1{|R_j|}\int_{R_j}|b-c|\le  \frac 1{|R_{j+1}|}\int_{R_{j+1}} |b-c|.
	\]
	Since $R=R_0$, we can use the above estimate repeatedly to get that for all $j$ we have
	\begin{align*}
		\frac 1{|R|}\int_{R}|b-\langle b\rangle_{R}|
		 & \le  \frac 1{|R_{j}|}\int_{R_{j}} |b- \langle b\rangle_{R}|   \\
		 & \le \frac 1{|R_{j}|}\int_{R_{j}} |b- \langle b\rangle_{R_j}|
		+ |\langle b \rangle_{R_j} - \langle b \rangle_R|                \\
		 & \le \frac 1{|R_{j}|}\int_{R_{j}} |b- \langle b\rangle_{R_j}|
		+ \frac 1{|R|}\int_{R} |b- \langle b\rangle_{R_j}|
		 & \le \frac 2{|R_{j}|}\int_{R_{j}} |b- \langle b\rangle_{R_j}|.
	\end{align*}
	By \eqref{eq:vanish3} it remains to let $j \to \infty$.
\end{proof}

\begin{thm}\label{thm:compactnessoff}
	Let $T$ be a non-degenerate Zygmund invariant SIO,
	$b \in L^s_{\loc}$ for some $1<s<\infty$, and $1 < p <q< \infty$. Assume that the commutator
	$[b, T] \colon L^p \to L^q$ is compact. Then $b$ is a constant.
\end{thm}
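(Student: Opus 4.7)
The plan is to combine the vanishing theorem stated just before Theorem \ref{thm:comdiag} with the pointwise characterization of $\bmo_Z^\alpha$ provided by Theorem \ref{thm:equidefoff}. Together, these collapse the problem into a one-dimensional statement that immediately forces $b$ to be constant, bypassing the multi-stage averaging argument of Theorem \ref{thm:comdiag}.

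First I would note that, since compactness of $[b, T] \colon L^p \to L^q$ implies boundedness, we have $\Off_p^q \lesssim \|[b,T]\|_{L^p \to L^q} < \infty$, and Theorem \ref{thm:equidefoff} gives $\|b\|_{\bmo_Z^\alpha} < \infty$ where $\alpha := 1/p - 1/q > 0$. Under the standing hypothesis $b \in L^s_{\loc}$ with $s > 1$, the second half of that same theorem applies and yields, after modification of $b$ on a null set, that
\[
b(x) = \tilde b(x_3)
\]
for some $2\alpha$-Hölder continuous function $\tilde b \colon \R \to \C$. In particular $b$ is already constant in its first two coordinates, and the only remaining task is to show that $\tilde b$ is constant.

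With this reduction, for any Zygmund rectangle $R = I^1 \times I^2 \times I^3$, Fubini gives $\osc(b, R) = |I^3|^{-1}\int_{I^3}|\tilde b(x_3) - \langle \tilde b \rangle_{I^3}|\ud x_3$, which depends only on $I^3$. Since $|R| = \ell(I^1)\ell(I^2)\ell(I^3) = \ell(I^3)^2$ for a Zygmund rectangle, we obtain
\[
\calO_\alpha(b, R) = \frac{1}{\ell(I^3)^{2\alpha}}\cdot \frac{1}{|I^3|}\int_{I^3}|\tilde b - \langle \tilde b \rangle_{I^3}|\ud x_3,
\]
which is independent of $\ell(I^1)$ and $\ell(I^2)$. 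Fixing any interval $I^3$ and forming Zygmund rectangles $R_i := I_i^1 \times I_i^2 \times I^3$ with $\ell(I_i^1) \to 0$ and $\ell(I_i^2) = \ell(I^3)/\ell(I_i^1)$, the value $\calO_\alpha(b, R_i)$ is a constant in $i$, and so by the theorem preceding Theorem \ref{thm:comdiag} this common value must be $0$. Hence $\osc(\tilde b, I^3) = 0$ for every interval $I^3 \subset \R$, and the continuity of $\tilde b$ forces it to be constant, so that $b$ is constant. The only point that needs genuine care is the reduction to $\tilde b$ in the first step; once this is in hand, the vanishing theorem closes the argument in one line.
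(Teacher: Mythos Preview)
Your proof is correct and uses the same two ingredients as the paper: Theorem \ref{thm:equidefoff} to reduce $b$ to a H\"older function $\tilde b$ of $x_3$ alone, and the vanishing theorem preceding Theorem \ref{thm:comdiag} to force the relevant oscillations to zero. The difference lies in how the second step is executed. The paper, having fixed $\eps>0$ and obtained a threshold $t$, builds a chain of Zygmund rectangles $I_k,J_k$ with shrinking first side length $2^{-k}t$ and re-runs the telescoping argument from the proof of Theorem \ref{thm:equidefoff} to deduce $|b(x)-b(x_{12},y_3)|\lesssim \eps|x_3-y_3|^{2\alpha}$. Your route is shorter: once $b=\tilde b(x_3)$, the identity $|R|=\ell(I^3)^2$ makes $\calO_\alpha(b,R)$ depend only on $I^3$, so sending $\ell(I^1)\to 0$ while holding $I^3$ fixed produces a sequence with constant $\calO_\alpha$, which the vanishing theorem then forces to be exactly zero. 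This bypasses the telescoping sum entirely and yields $\osc(\tilde b,I^3)=0$ directly. The paper's argument, by contrast, does not rely on the observation that $\calO_\alpha$ is independent of $\ell(I^1)$, and would adapt more readily to situations where only a quantitative smallness (rather than exact constancy) of $\calO_\alpha$ along the sequence is available; but in the present setting your argument is cleaner.
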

\begin{proof}
	To simplify our life a bit, we use that by Theorem \ref{thm:equidefoff} we already know
	(as $[b,T]$ is, in particular, bounded $L^p \to L^q$) that
	\begin{equation}
		|b(x)-b(y)|\lesssim |x_3-y_3|^{2\alpha}.
	\end{equation}
	In particular, $b$ is constant in the first and second variables.
	So we need only to prove that given $x = (x_{12}, x_3)$ and $y_3 \ne x_3$ we have
	$b(x) = b(x_{12}, y_3)$.

	To this end, we recall that we proved that if $\eps > 0$ then there exists $t > 0$
	so that for all Zygmund rectangles $R = I^1 \times I^2 \times I^3$
	satisfying that $\ell(I^j) \le t$ for some $j = 1,2,3$,
	we have $\calO_{\alpha}(b, R_i) < \eps$, $\alpha := 1/p - 1/q > 0$.
	We will use this with $j = 1$ below.

	Let $I_k, J_k$ $(k\in \Z_+)$
	be the Zygmund rectangles centered at $x$ and $(x_{12}, y_3)$, respectively,
	that have side lengths $2^{-k}t$, $2^{-k}|x_3-y_3|/t$ and $2^{-2k}|x_3-y_3|$.
	We also let $I_0=J_0 $ be the Zygmund rectangle centered at $(x_1,x_2, (x_3+y_3)/2)$
	with side lengths $t, 2|x_3-y_3|/t$ and $2|x_3-y_3|$.
	Repeating the calculus in the proof of Theorem \ref{thm:equidefoff}
	we get
	\[
		|b(x)-b(x_{12}, y_3)|\lesssim \eps |x_3-y_3|^{2\alpha}.
	\]
	Since $\eps$ was arbitrary, we have $b(x)-b(x_{12}, y_3)=0$ as desired.
\end{proof}

\appendix

\section{A concrete Fefferman--Pipher multiplier with unbounded derivatives}\label{app:FP}
In this section we give an example of a Fefferman--Pipher multiplier $m$ that has the feature that
$\partial_{i} m \notin L^\infty$ for all $i=1,2,3$.
In general, we have not seen (but they may exist) explicit Fefferman--Pipher multipliers in the literature so this can be interesting on its own.
We originally came up with this example to indicate a problem in
the proof of Theorem 1.5 in \cite{DLOPW-ZYGMUND} (the authors have told us this preprint
is now withdrawn based partly on these findings).
The issue appears to be the usage of the derivatives of these multipliers as $L^2$
multipliers -- but the derivatives can be unbounded also in this Zygmund situation.

Our example is
\begin{equation}\label{eq:exampleFP}
	m(\xi_1, \xi_2, \xi_3):=\frac{\xi_3}{(\xi_1^2\xi_2^2+\xi_3^2)^{1/2}}.
\end{equation}
We show the following.
\begin{prop}
	Let $m$ be given by \eqref{eq:exampleFP}. Then $m$ satisfies
	\[
		\abs{ \partial^\alpha m(\xi)} \lesssim
		|\xi_1|^{-\alpha_1+\alpha_2} |(|\xi_1|\xi_2, \xi_3)|^{-\alpha_2-\alpha_3}
	\]
	for all multi-indices $\alpha=(\alpha_1, \alpha_2, \alpha_3)$. This means that $m \in \calM_Z^1$, i.e.,
	$m$ is a Fefferman--Pipher multiplier.
	Moreover, $\partial_{\xi_i} m \notin L^\infty$ for all $i=1,2,3$.
\end{prop}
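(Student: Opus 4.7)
The basic observation driving the first claim is that $m$ is invariant under the Zygmund dilations: $m(s\xi_1, t\xi_2, st\xi_3) = m(\xi)$ for all $s,t > 0$ (direct algebra using that the expression $\xi_1^2\xi_2^2 + \xi_3^2$ scales by $(st)^2$ and the numerator $\xi_3$ scales by $st$). Differentiating both sides $\alpha_1, \alpha_2, \alpha_3$ times in the respective variables gives
\[
(\partial^\alpha m)(\xi) = s^{\alpha_1} t^{\alpha_2} (st)^{\alpha_3} (\partial^\alpha m)(s\xi_1, t\xi_2, st\xi_3).
\]
This identity will carry essentially all of the work.

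Given $\xi$ with $\xi_1 \neq 0$, I set $|\eta| := (\xi_1^2 \xi_2^2 + \xi_3^2)^{1/2}$ and choose the rescaling parameters
\[
s := |\xi_1|^{-1}, \qquad t := |\xi_1|/|\eta|,
\]
so that $s\xi_1 = \pm 1$ and $(st)^2|\eta|^2 = 1$. Hence the rescaled point $\xi' := (s\xi_1, t\xi_2, st\xi_3)$ lies in the compact set
\[
K := \bigl\{\, (\sigma, a, b)\in \R^3 : \sigma = \pm 1,\ a^2 + b^2 = 1 \,\bigr\}.
\]
Since $m(\zeta) = F(\zeta_1\zeta_2, \zeta_3)$ with $F(u,v) := v/(u^2+v^2)^{1/2}$ smooth on $\R^2\setminus\{0\}$, and since at every point of $K$ one has $(\zeta_1\zeta_2)^2 + \zeta_3^2 = 1 \neq 0$, the function $m$ is $C^\infty$ in an open neighborhood of $K$. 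In particular $\|\partial^\alpha m\|_{L^\infty(K)} =: C_\alpha < \infty$. Substituting into the scaling identity yields exactly
\[
|\partial^\alpha m(\xi)| \le C_\alpha \cdot |\xi_1|^{-\alpha_1+\alpha_2}|\eta|^{-\alpha_2-\alpha_3},
\]
which is the desired Fefferman--Pipher estimate (the locus $\xi_1 = 0$ has measure zero and the estimate may be read on $\{\xi_1 \neq 0\}$ as is standard for $\mathcal{M}_Z^1$).

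For the second claim I simply compute the three first partials explicitly and test them along carefully chosen sequences. A direct computation gives
\[
\partial_{\xi_1} m = -\frac{\xi_1\xi_2^2\xi_3}{(\xi_1^2\xi_2^2+\xi_3^2)^{3/2}}, \quad
\partial_{\xi_2} m = -\frac{\xi_1^2\xi_2\xi_3}{(\xi_1^2\xi_2^2+\xi_3^2)^{3/2}}, \quad
\partial_{\xi_3} m = \frac{\xi_1^2\xi_2^2}{(\xi_1^2\xi_2^2+\xi_3^2)^{3/2}}.
\]
Evaluating the first two on the Zygmund manifold $\xi_3 = \xi_1\xi_2$ (where the denominator becomes $(2\xi_1^2\xi_2^2)^{3/2}$) reduces them to $\pm(2\sqrt 2\, |\xi_1|)^{-1}$ and $\pm(2\sqrt 2\, |\xi_2|)^{-1}$ respectively, both unbounded as the relevant coordinate tends to $0$; evaluating $\partial_{\xi_3} m$ at $\xi_3 = 0$ gives $|\xi_1\xi_2|^{-1}$, unbounded as $\xi_1\xi_2 \to 0$. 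The main conceptual point is the scaling reduction to a compact smooth set; the rest is routine verification.
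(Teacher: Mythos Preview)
Your argument is correct and takes a genuinely different route from the paper. The paper proceeds by an explicit induction: it computes the first partials by hand, then proves by induction on $|\alpha|$ a structural formula
\[
\partial^\alpha m(\xi)=\xi_1^{-\alpha_1+\alpha_2} |(|\xi_1|\xi_2, \xi_3)|^{-\alpha_2-\alpha_3}\sum_{k\le 2|\alpha|+1}\frac{P_k(\xi_1\xi_2, \xi_3)}{(\xi_1^2\xi_2^2+\xi_3^2)^{k/2}},
\]
with $P_k$ a polynomial of degree $k$, and reads off the estimate from this. Your approach instead exploits directly the Zygmund dilation invariance $m(s\xi_1,t\xi_2,st\xi_3)=m(\xi)$: differentiating yields the scaling identity for $\partial^\alpha m$, and a single choice of $(s,t)$ normalizes any $\xi$ to a compact set $K$ on which $m$ is smooth. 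This is conceptually cleaner and explains \emph{why} the Fefferman--Pipher exponents $-\alpha_1+\alpha_2$ and $-\alpha_2-\alpha_3$ arise: they are precisely the homogeneities forced by the dilation group. The paper's induction, by contrast, buys an explicit formula for $\partial^\alpha m$, which is more information than needed here but could be useful if one wanted finer structure. One small point: your rescaling also needs $|\eta|=(\xi_1^2\xi_2^2+\xi_3^2)^{1/2}\neq 0$ (not just $\xi_1\neq 0$), but $m$ is undefined on that set anyway, so this is harmless. For the unboundedness claim your explicit evaluations agree with the paper's (which tests on the region $[\eps,2\eps]^2\times[\eps^2,2\eps^2]$ instead).
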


\begin{proof}
	We first notice that
	$$
		\abs{ \partial^\alpha m(\xi)} \lesssim
		|\xi_1|^{-\alpha_1+\alpha_2} |(|\xi_1|\xi_2, \xi_3)|^{-\alpha_2-\alpha_3}
	$$
	is, indeed, the Fefferman--Pipher condition -- while it was not phrased like this in \cite{FEPI},
	it has been spelled out in this form in \cite{HLMV}.

	Of course, for $\alpha=(0,0,0)$ we have $|m(\xi)|\le 1$ and the claimed estimate holds.
	Next, we estimate  $\partial_{\xi_i} m(\xi)$ for $i=1,2,3$.
	Direct computations give the following
	\begin{align*}
		\partial_{\xi_1} m(\xi) & = \xi_3 (-\frac 12) (\xi_1^2\xi_2^2+\xi_3^2)^{-3/2} 2\xi_1 \xi_2^2
		=- \xi_1^{-1}\frac{\xi_1^2\xi_2^2 \xi_3}{ (\xi_1^2\xi_2^2+\xi_3^2)^{3/2}},                                           \\
		\partial_{\xi_2} m(\xi) & = \xi_3 (-\frac 12) (\xi_1^2\xi_2^2+\xi_3^2)^{-3/2} 2\xi_1^2 \xi_2
		=- \xi_1\frac{\xi_1 \xi_2  \xi_3}{ (\xi_1^2\xi_2^2+\xi_3^2)^{3/2}},                                                  \\
		\partial_{\xi_3} m(\xi) & = \xi_3 (-\frac 12) (\xi_1^2\xi_2^2+\xi_3^2)^{-3/2} 2\xi_3+(\xi_1^2\xi_2^2+\xi_3^2)^{-1/2}
		= \frac{\xi_1^2\xi_2^2  }{ (\xi_1^2\xi_2^2+\xi_3^2)^{3/2}}.
	\end{align*}
	It is clear that when
	\[
		(\xi_1, \xi_2, \xi_3)\in [\eps, 2\eps]\times [\eps, 2\eps]\times [\eps^2, 2\eps^2],\qquad \text{where $\eps>0$},
	\]
	we have that $|\partial_{\xi_i} m(\xi)|\sim \eps^{-1}$ for $i=1,2$ and $|\partial_{\xi_3} m(\xi)|\sim \eps^{-2}$.
	This gives that $\partial_{\xi_i} m(\xi)\notin L^\infty$ for all $i=1,2,3$.
	Moreover, since $|(|\xi_1|\xi_2, \xi_3)|=(\xi_1^2\xi_2^2+\xi_3^2)^{1/2} $,
	\[
		|\xi_1\xi_2|\le (\xi_1^2\xi_2^2+\xi_3^2)^{1/2} \qquad \textup{and} \qquad |\xi_3|\le (\xi_1^2\xi_2^2+\xi_3^2)^{1/2},
	\] we have proved the Fefferman--Pipher style derivative estimate whenever $|\alpha|=1$.

	The general case will be proved via induction. We claim that in general
	\begin{equation}\label{eq:induction}
		\partial^\alpha m(\xi)=\xi_1^{-\alpha_1+\alpha_2} |(|\xi_1|\xi_2, \xi_3)|^{-\alpha_2-\alpha_3}\sum_{k\le 2|\alpha|+1}\frac{P_k(\xi_1\xi_2, \xi_3)}{(\xi_1^2\xi_2^2+\xi_3^2)^{k/2}},
	\end{equation}
	where $P_k(t_1, t_2)$ is a polynomial of order $k$. Clearly, for $|\alpha|=1$ \eqref{eq:induction} holds.
	Suppose \eqref{eq:induction} is true for all $\alpha$ with $|\alpha|\le N$.
	We calculate
	\begin{align*}
		 & \partial_{\xi_1}\partial^\alpha m(\xi)                                                                                                                                                                                                                                                  \\
		 & = (-\alpha_1+\alpha_2)\xi_1^{-\alpha_1-1+\alpha_2} |(|\xi_1|\xi_2, \xi_3)|^{-\alpha_2-\alpha_3}\sum_{k\le 2|\alpha|+1}\frac{P_k(\xi_1\xi_2, \xi_3)}{(\xi_1^2\xi_2^2+\xi_3^2)^{k/2}}                                                                                                     \\
		 & \quad+(-\alpha_2-\alpha_3)\xi_1^{-\alpha_1-1+\alpha_2} |(|\xi_1|\xi_2, \xi_3)|^{-\alpha_2-\alpha_3}\sum_{k\le 2|\alpha|+1}\frac{\xi_1^2 \xi_2^2P_k(\xi_1\xi_2, \xi_3)}{(\xi_1^2\xi_2^2+\xi_3^2)^{k/2+1}}                                                                                \\
		 & \quad+ \xi_1^{-\alpha_1-1+\alpha_2} |(|\xi_1|\xi_2, \xi_3)|^{-\alpha_2-\alpha_3}\sum_{k\le 2|\alpha|+1}\Big(\frac{\xi_1\xi_2(\partial_1 P_k)(\xi_1\xi_2, \xi_3)}{(\xi_1^2\xi_2^2+\xi_3^2)^{k/2}}-k \frac{\xi_1^2 \xi_2^2 P_k(\xi_1\xi_2, \xi_3)}{(\xi_1^2\xi_2^2+\xi_3^2)^{k/2+1}}\Big)
	\end{align*}
	which is exactly of the form \eqref{eq:induction}, since $k+2\le 2(|\alpha|+1)+1$.
	Similarly, we have
	\begin{align*}
		 & \partial_{\xi_2}\partial^\alpha m(\xi)                                                                                                                                                                                                                                           \\
		 & = (-\alpha_2-\alpha_3)\xi_1^{-\alpha_1+\alpha_2+1} |(|\xi_1|\xi_2, \xi_3)|^{-\alpha_2-1-\alpha_3}\sum_{k\le 2|\alpha|+1}\frac{\xi_1 \xi_2 P_k(\xi_1\xi_2, \xi_3)}{(\xi_1^2\xi_2^2+\xi_3^2)^{(k+1)/2}}                                                                            \\
		 & \quad+ \xi_1^{-\alpha_1+\alpha_2+1} |(|\xi_1|\xi_2, \xi_3)|^{-\alpha_2-1-\alpha_3}\sum_{k\le 2|\alpha|+1}\Big(\frac{(\partial_1 P_k)(\xi_1\xi_2, \xi_3)}{(\xi_1^2\xi_2^2+\xi_3^2)^{(k-1)/2}}-k\frac{\xi_1 \xi_2 P_k(\xi_1\xi_2, \xi_3)}{(\xi_1^2\xi_2^2+\xi_3^2)^{(k+1)/2}}\Big)
	\end{align*}
	and
	\begin{align*}
		 & \partial_{\xi_3}\partial^\alpha m(\xi)                                                                                                                                                                                                                                    \\
		 & = (-\alpha_2-\alpha_3)\xi_1^{-\alpha_1+\alpha_2} |(|\xi_1|\xi_2, \xi_3)|^{-\alpha_2-\alpha_3-1}\sum_{k\le 2|\alpha|+1}\frac{\xi_3 P_k(\xi_1\xi_2, \xi_3)}{(\xi_1^2\xi_2^2+\xi_3^2)^{(k+1)/2}}                                                                             \\
		 & \quad+ \xi_1^{-\alpha_1+\alpha_2} |(|\xi_1|\xi_2, \xi_3)|^{-\alpha_2-\alpha_3-1}\sum_{k\le 2|\alpha|+1}\Big(\frac{(\partial_2P_k)(\xi_1\xi_2, \xi_3)}{(\xi_1^2\xi_2^2+\xi_3^2)^{(k-1)/2}}-k \frac{\xi_3 P_k(\xi_1\xi_2, \xi_3)}{(\xi_1^2\xi_2^2+\xi_3^2)^{(k+1)/2}}\Big),
	\end{align*}
	which are also of the form \eqref{eq:induction}. Hence,
	we have proved \eqref{eq:induction} for all $\alpha$. The estimate
	\[
		\abs{ \partial^\alpha m(\xi)} \lesssim
		|\xi_1|^{-\alpha_1+\alpha_2} |(|\xi_1|\xi_2, \xi_3)|^{-\alpha_2-\alpha_3}
	\]
	follows then immediately.
\end{proof}

\bibliography{references}

\end{document}